\documentclass[12pt]{article}
\usepackage{amsfonts, mathrsfs, enumerate, amsmath, amsthm, amssymb, indentfirst, mathtools}
\usepackage[colorlinks]{hyperref}
\hypersetup{linkcolor=blue, urlcolor=blue, citecolor=red}

\setlength{\textwidth}{6.5in} \setlength{\textheight}{9in}
\setlength{\evensidemargin}{0in} \setlength{\oddsidemargin}{0in}
\setlength{\topmargin}{-.6in}

\newtheorem{theorem}{Theorem}[section]
\newtheorem{cor}[theorem]{Corollary}
\newtheorem{prop}[theorem]{Proposition}
\newtheorem{lem}[theorem]{Lemma}
\newtheorem{question}[theorem]{Question}

\theoremstyle{definition}
\newtheorem{eg}[theorem]{Example}

\newcommand{\N}{\mathbb{N}}
\newcommand{\R}{\mathbb{R}}
\newcommand{\Q}{\mathbb{Q}}
\newcommand{\T}{\mathcal{T}}
\newcommand{\B}{\mathcal{B}}
\newcommand{\rt}{\mathfrak{r}}
\newcommand{\lt}{\mathfrak{l}}
\newcommand{\mt}{\mathfrak{m}}
\newcommand{\set}[2]{\{#1:#2\}}  

\newcommand{\transO}{\Omega^{\Omega}}
\newcommand{\symO}{\mathrm{Sym}(\Omega)}

\newcommand{\verteq}{\rotatebox{90}{$=$}}

\newcommand{\cf}{\operatorname{cf}}

\renewcommand{\P}{\mathcal{P}}

\begin{document}

\title{Topological transformation monoids}
\author{Z.\ Mesyan, J.\ D.\ Mitchell, and Y.\ P\'eresse}

\maketitle

\begin{abstract}
We investigate semigroup topologies on the full transformation monoid $\transO$ of an infinite set $\Omega$. We show that the standard pointwise topology is the weakest Hausdorff semigroup topology on $\transO$, show that this topology is the unique Hausdorff semigroup topology on $\transO$ that induces the pointwise topology on the group $\symO$ of all permutations of $\Omega$, and construct $|\Omega|$ distinct Hausdorff semigroup topologies on $\transO$. In the case where $\Omega$ is countable, we prove that the pointwise topology is the only Polish semigroup topology on $\transO$. We also show that every separable semigroup topology on $\transO$ is perfect, describe the compact sets in an arbitrary Hausdorff semigroup topology on $\transO$, and show that there are no locally compact perfect Hausdorff semigroup topologies on $\transO$ when $|\Omega|$ has uncountable cofinality.

\medskip

\noindent
\emph{Keywords:} transformation monoid, topological semigroup, semitopological semigroup, pointwise topology, Polish space

\noindent
\emph{2010 MSC numbers:} 08A35, 20M20, 54H15
\end{abstract}


\section{Introduction}

Recall that a \emph{topological group} is a group $G$ together with a topology on $G$ that makes the multiplication and inversion operations continuous. We shall refer to topologies of this sort as \emph{group topologies}. The discrete and trivial topologies are group topologies on every group, but the question of finding interesting group topologies has received a great deal of attention in the literature. We begin with a brief overview of this literature, to motivate our work in this paper.

Markov~\cite{Markoff1944aa} sparked significant interest in the subject when he asked whether there exist infinite groups with no nondiscrete Hausdorff group topologies. This question was answered in the affirmative by Shelah~\cite{Shelah1980aa}, assuming the continuum hypothesis, and, in ZFC, by Olshanskii~\cite{Olshanskij1980aa} in the same year. There are even infinite groups where every quotient of every subgroup has no nondiscrete Hausdorff group topologies~\cite{Klyachko2012aa}.

A substantial portion of the literature on topological groups has focused on \emph{Polish} groups, which arise in many areas of mathematics, particularly in descriptive set theory. These are topological groups where the topology is \emph{Polish}, that is, completely metrizable and separable. Compared to the situation explored by Markov, it is easy to find examples both of Polish groups and of topological groups that are not Polish. Specifically, since by the Cantor-Bendixson Theorem~\cite[Theorem 6.4]{Kechris}, every Polish space has cardinality either at most $\aleph_0$ or equal to $2^{\aleph_0}$, any topological group with cardinality greater than $2^{\aleph_0}$ is not Polish. Moreover, there is no nondiscrete Polish group topology on any free group. (This follows from the result of Dudley~\cite{Dudley1961aa} that every homomorphism from a complete metric group to a free group, with the discrete topology, is continuous.)  On the other end of the spectrum are Polish groups with infinitely many non-homeomorphic Polish group topologies. Perhaps the simplest such example is that of the additive group $\R$ of real numbers, which is a Polish group with respect to the standard topology. As vector spaces over the field $\Q$ of the rational numbers, $\R$ and $\R^n$ are isomorphic for every positive integer $n$, and so $\R$ and $\R^n$ are isomorphic as additive groups also. But $\R^n$ is homeomorphic to $\R^m$ if and only if $m = n$, and so there are infinitely many non-homeomorphic Polish group topologies on $\R$. In contrast, Shelah~\cite{Shelah1984aa} showed that it is consistent with the Zermelo-Fraenkel axioms of set theory, without the axiom of choice, that every Polish group has a unique Polish group topology. Of course, in the above example, the axiom of choice was needed to show that $\R$ and $\R^n$ are isomorphic. 

One of the most extensively studied topological groups is the group $\symO$ of all permutations of a set $\Omega$, which has a natural group topology, known as the \emph{pointwise} topology. This topology is Polish in the case where $\Omega$ is countable. Gaughan~\cite{Gaughan} showed that every Hausdorff group topology on $\symO$ contains the pointwise topology, and that there is no nondiscrete locally compact Hausdorff group topology on $\symO$. The latter answered problem 96 in the Scottish Book~\cite{Mauldin2015aa}, posed by Ulam. Kechris and Rosendal~\cite{KR} showed that any homomorphism from $\symO$ into a separable group is continuous, which together with Gaughan's result about the pointwise topology implies that there is a unique Polish group topology on $\symO$, when $\Omega$ is countable. Moreover, Rosendal~\cite{Rosendal2005aa} extended Gaughan's local compactness result to show that there are not even any non-trivial homomorphisms from $\symO$ into a locally compact Polish group. Gaughan's result regarding the pointwise topology was also recently extended, by Banakh, Guran, and Protasov~\cite{BGP}, to any subgroup of $\symO$ containing the permutations with finite support.

There are many further examples of groups with a unique Polish group topology, such as
the group of isometries of Minkowski spacetime (the usual framework for special
relativity)~\cite{Kallman2010aa}; see also \cite{Gartside2008aa,
Kallman1986aa}. Furthermore, there is a wealth of examples of infinite groups with no Polish group topologies~\cite{Cohen2016aa, Mann2017aa}. Additional references include \cite{Chang2017aa, Dixon, Hart2014aa}.

\emph{Topological semigroups}, that is, semigroups with topologies that make multiplication continuous, have received somewhat less attention in the literature than topological groups. However, some notable recent papers on the topic include~\cite{Banakh2014aa, Bodirsky2017aa, Bodirsky2017-2}.

In this paper we focus on the natural semigroup analogue of $\symO$, namely the semigroup $\transO$ of all functions from $\Omega$ to $\Omega$, which also has a natural \emph{pointwise} topology. Our goal is to explore the pointwise topology on $\transO$ in detail, along with semigroup topologies on $\transO$ in general. In Section~\ref{basics-section} we give a brief review of the basics of topological semigroups, along with some methods for constructing topologies on semigroups. We then show in Theorem~\ref{point-finer} that the pointwise topology is the weakest $T_1$ (and hence also Hausdorff) semigroup topology on $\transO$. More generally, the same holds for the topology induced by the pointwise topology on any subsemigroup of $\transO$ that contains all the elements of ranks $1$ and $2$. This is analogous to the result of Gaughan regarding the pointwise topology on $\symO$ mentioned above. 

Using Theorem~\ref{point-finer} we show in Proposition~\ref{group-top} that the pointwise topology is the unique $T_1$ semigroup topology on $\transO$ that induces the pointwise topology on $\symO$. From this we conclude in Theorem~\ref{polish-thrm} that, analogously to a result about $\symO$ stated above, the pointwise topology is the only Polish semigroup topology on $\transO$, when $\Omega$ is countably infinite. We also show in Theorem~\ref{many-isolated} that if $\Omega$ is infinite, then there are either no isolated points or $2^{\cf(|\Omega|)}$ isolated points in any semigroup topology on $\transO$, where $\cf(|\Omega|)$ denotes the cofinality of $|\Omega|$. In particular, every separable semigroup topology on $\transO$ is perfect (i.e., has no isolated points), when $\Omega$ is infinite (Corollary~\ref{separable-perfect}). 

In Section~\ref{compact-section} we describe the compact sets in an arbitrary $T_1$ semigroup topology on $\transO$ (Proposition~\ref{compact-prop}). We then show that  there are no locally compact perfect $T_1$ semigroup topologies on $\transO$ when $|\Omega|$ has uncountable cofinality (Theorem~\ref{loc-comp-perf}). This is a partial analogue of Gaughan's local compactness result mentioned above.

Along the way we give various examples to illustrate our results. For instance, we construct $|\Omega|$ distinct Hausdorff semigroup topologies on $\transO$, for $\Omega$ infinite (Proposition~\ref{top-chain-prop}). We also construct a natural perfect Hausdorff semigroup topology on $\transO$ for $\Omega$ of regular cardinality, which is separable when $\Omega$ is countable, and has a very different flavour from the pointwise topology (Proposition~\ref{open-prod-top}). The paper concludes with an open question.

\section{Topological semigroups} \label{basics-section}

A \emph{topological semigroup} is a semigroup $S$ together with a topology on
$S$, such that the semigroup multiplication, viewed as a function $\mt : S\times S \to S$, is continuous; where the topology on $S\times S$ is the corresponding product topology.  A semigroup is \emph{semitopological} if for every $s\in S$ the maps $\rt_s:S\rightarrow S$ and $\lt_s:S\rightarrow S$ induced by right, respectively left, multiplication by $s$ are continuous with respect to the relevant topology. It is a standard and easily verified fact that every topological semigroup is semitopological.

Next we give several simple but useful methods for constructing topologies on semigroups.

\begin{lem} \label{hom-lemma}
Let $f : S_1 \to S_2$ be a homomorphism of semigroups. Suppose that $S_2$ is a topological semigroup with respect to a topology $\, \T_2$, and let $\, \T_1$ be the least topology on $S_1$ such that $f$ is continuous. Then $\, \T_1 = \set{(A)f^{-1}}{A \in \T_2}$, and $S_1$ is a topological semigroup with respect to $\, \T_1$.
\end{lem}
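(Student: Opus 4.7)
My plan is to prove the two claims in sequence: first I would establish the explicit description $\T_1 = \{(A)f^{-1} : A \in \T_2\}$, and then I would use this description together with the fact that $f$ is a homomorphism to show that multiplication on $S_1$ is continuous with respect to $\T_1$.

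For the first claim, I would let $\mathcal{C} := \{(A)f^{-1} : A \in \T_2\}$ and verify directly that $\mathcal{C}$ is itself a topology on $S_1$. This is immediate from the elementary preimage identities $(\emptyset)f^{-1} = \emptyset$, $(S_2)f^{-1} = S_1$, $(\bigcup_i A_i)f^{-1} = \bigcup_i (A_i)f^{-1}$, and $(A \cap B)f^{-1} = (A)f^{-1} \cap (B)f^{-1}$. Since any topology on $S_1$ that makes $f$ continuous must by definition contain every set of the form $(A)f^{-1}$ with $A \in \T_2$, the collection $\mathcal{C}$ is the least such topology, and hence $\T_1 = \mathcal{C}$.

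For the second claim, I would fix $A \in \T_2$ and compute the preimage $((A)f^{-1})\mt_1^{-1}$ directly. The homomorphism property $(xy)f = ((x)f)((y)f)$ yields, for all $(x, y) \in S_1 \times S_1$, the chain of equivalences
$$(x, y) \in ((A)f^{-1})\mt_1^{-1} \iff (xy)f \in A \iff ((x)f)((y)f) \in A \iff ((x)f,(y)f) \in (A)\mt_2^{-1},$$
so that $((A)f^{-1})\mt_1^{-1} = ((A)\mt_2^{-1})(f \times f)^{-1}$. Continuity of $\mt_2$ makes $(A)\mt_2^{-1}$ open in the product topology on $S_2 \times S_2$, hence a union of basic rectangles $B_1 \times B_2$ with $B_1, B_2 \in \T_2$. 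Each such rectangle pulls back under $f \times f$ to $(B_1)f^{-1} \times (B_2)f^{-1}$, which is a basic open rectangle in the product topology on $S_1 \times S_1$ arising from $\T_1$. Thus $\mt_1^{-1}$ sends $\T_1$-open sets to opens in $S_1 \times S_1$, so $\mt_1$ is continuous.

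I do not anticipate any substantive obstacle: the entire argument is routine initial-topology bookkeeping. The only point warranting mild care is that continuity of $\mt_1$ must be verified against the \emph{product} topology on $S_1 \times S_1$ generated by $\T_1$, rather than some other initial topology; this is precisely what the explicit description of $\T_1$ from the first part makes transparent, since preimages of rectangles under $f \times f$ are themselves rectangles whose sides lie in $\T_1$.
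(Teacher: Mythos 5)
Your proposal is correct and follows essentially the same route as the paper's proof: establish $\T_1 = \set{(A)f^{-1}}{A \in \T_2}$ via the preimage identities, then use the homomorphism property to identify $((A)f^{-1})\mt_1^{-1}$ with the preimage of $(A)\mt_2^{-1}$ under $f \times f$. The only (immaterial) difference is that you verify the continuity of $f \times f$ by hand through basic rectangles, where the paper simply cites the fact that a map into a product is continuous when its coordinate functions are.
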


\begin{proof}
Noting that $(\bigcup_{i \in I} A_i)f^{-1} = \bigcup_{i \in I} (A_i)f^{-1}$ and $(\bigcap_{i \in I} A_i)f^{-1} = \bigcap_{i \in I} (A_i)f^{-1}$ for any collection $\set{A_i}{i \in I}$ of subsets of $S_2$, it follows that $\set{(A)f^{-1}}{A \in \T_2}$ is a topology on $S_1$. Clearly, this topology is contained in $\T_1$, and $f$ is  continuous with respect to it. Therefore $\T_1 = \set{(A)f^{-1}}{A \in \T_2}$.

Next, let $\mt_i : S_i \times S_i \to S_i$ denote the multiplication map on $S_i$, for $i \in \{1, 2\}$, and define $f_c : S_1 \times S_1 \to S_2 \times S_2$ by $(x,y)f_c = ((x)f,(y)f)$ for all $x,y \in S_1$. Then, viewing $S_1 \times S_1$ and $S_2 \times S_2$ as topological spaces in the product topologies induced by $\T_1$ and $\T_2$, respectively, $f_c$ is continuous, since both of its coordinate functions, namely $f : S_1 \to S_2$, are continuous. (See, e.g.,~\cite[Exercise 18.10]{Munkres}.) 

To show that $\mt_1$ is continuous, let $A \in \T_1$, and let $A'\in \T_2$ be such that $A = (A')f^{-1}$. Then $$(x,y) \in (A)\mt_1^{-1} \Leftrightarrow xy \in (A')f^{-1} \Leftrightarrow (x)f(y)f \in A'$$ $$\Leftrightarrow ((x)f,(y)f)\in (A')\mt_2^{-1} \Leftrightarrow (x,y)\in (A')\mt_2^{-1}f_c^{-1}$$ for all $x,y \in S_1$, and hence $(A)\mt_1^{-1} = (A')\mt_2^{-1}f_c^{-1}$. Since the composite $f_c \mt_2 : S_1 \times S_1 \to S_2$ of continuous functions is continuous, and $A' \in \T_2$, it follows that $(A)\mt_1^{-1}$ is open in $S_1\times S_1$. Hence $\mt_1$ is continuous, and therefore $S_1$ is a topological semigroup with respect to $\T_1$.
\end{proof}

\begin{lem} \label{semi-top-lemma}
Let $S$ be a semigroup.
\begin{enumerate}
\item[$(1)$] Given an ideal $I$ of $S$, let $\, \T_1 = \set{A}{A \subseteq S\setminus I}\cup\{S\}$ and $\, \T_2 = \T_1 \cup \set{A \cup I}{A \in \T_1}$. Then $S$ is a topological semigroup with respect to $\, \T_1$ and $\, \T_2$.

\item[$(2)$] Let $\, \T_1$ and $\, \T_2$ be topologies on $S$, and let $\, \T_3$ be the topology generated by $\, \T_1 \cup \T_2$. If $S$ is a topological semigroup with respect to $\, \T_1$ and $\, \T_2$, then the same holds for $\, \T_3$.
\end{enumerate}
\end{lem}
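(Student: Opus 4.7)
My plan is to treat parts (1) and (2) separately. For part (1), I would first verify that $\T_1$ and $\T_2$ are actually topologies on $S$, then check continuity of $\mt$. The topology axioms for $\T_1$ are routine; for $\T_2$ the key set-theoretic identity is $(A \cup I) \cap (B \cup I) = (A \cap B) \cup I$, which holds because $A, B \subseteq S \setminus I$ are disjoint from $I$, together with the analogous observation for unions. The structural facts I will exploit throughout are that singletons $\{x\}$ with $x \in S \setminus I$ lie in $\T_1 \subseteq \T_2$, while the set $I$ itself lies in $\T_2$ (taking $A = \emptyset$ in the definition).

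For continuity of $\mt$ under $\T_1$, I would observe that every proper open $U \in \T_1$ is a subset of $S \setminus I$; since $I$ is an ideal, any $(x,y) \in \mt^{-1}(U)$ must satisfy $x, y \in S\setminus I$. Thus $\mt^{-1}(U) = \bigcup_{(x,y) \in \mt^{-1}(U)} \{x\} \times \{y\}$ is a union of basic open sets. For $\T_2$, the case $W \in \T_1$ reduces to the previous argument, so the real work is for $W = A \cup I$ with $A \subseteq S \setminus I$. Here I would split on $(x_0, y_0) \in \mt^{-1}(W)$ into three subcases: (a) if $x_0 y_0 \in A$, the ideal property forces $x_0, y_0 \notin I$ and singleton neighborhoods suffice; (b) if $x_0 y_0 \in I$ and $x_0 \in I$ (symmetrically for $y_0$), take $I \times S$, which is open in $\T_2 \times \T_2$ and whose image under $\mt$ lies in $I \subseteq W$ by the ideal property; (c) if $x_0 y_0 \in I$ but both $x_0, y_0 \notin I$, singletons again work because $\{x_0 y_0\} \subseteq I \subseteq W$.

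For part (2), I would argue at the subbasis level: since $\T_1 \cup \T_2$ is a subbasis for $\T_3$, it is enough to verify that $\mt^{-1}(U)$ is open in $(S, \T_3) \times (S, \T_3)$ for each $U \in \T_1 \cup \T_2$. If $U \in \T_i$, then by hypothesis $\mt^{-1}(U)$ is open in $(S, \T_i) \times (S, \T_i)$; and because $\T_i \subseteq \T_3$, the product topology induced by $\T_3$ refines that induced by $\T_i$, so $\mt^{-1}(U)$ remains open. I expect the main obstacle to be case (b) in the $\T_2$-continuity argument for part (1): one must recognize that the appropriate $\T_2$-open neighborhood of $(x_0, y_0)$ should use $I$ itself (rather than a singleton) when one coordinate lies in $I$, since such a point has no smaller open neighborhood; the ideal property of $I$ is precisely what makes this work.
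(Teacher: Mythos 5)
Your proof is correct, and it departs from the paper's in two places. For the topology $\T_2$ in part (1), the paper avoids your three-way case analysis entirely: it observes that $\T_2$ is precisely the family of preimages of subsets of the Rees quotient $S/I$ (given the discrete topology) under the natural homomorphism, and then invokes Lemma~\ref{hom-lemma}. That route is shorter and explains conceptually where $\T_2$ comes from; your direct argument is self-contained and makes explicit the point you correctly identify as the crux, namely that $I$ itself (obtained by taking $A=\emptyset$) is the minimal $\T_2$-open neighbourhood of any of its points, so that $I\times S$ and $S\times I$ are the right neighbourhoods in your case (b), with $\mt(I\times S)\cup\mt(S\times I)\subseteq I$ following from $I$ being a two-sided ideal. (Your handling of $\T_1$ is essentially the paper's: a proper open set pulls back into $(S\setminus I)\times(S\setminus I)$, where every subset is a union of open boxes $\{x\}\times\{y\}$.) For part (2), the paper writes an arbitrary $A\in\T_3$ as $\bigcup_i(B_i\cap C_i)$ with $B_i\in\T_1$, $C_i\in\T_2$, and expands $(A)\mt^{-1}$ using $(X_1\times Y_1)\cap(X_2\times Y_2)=(X_1\cap X_2)\times(Y_1\cap Y_2)$; your subbasis reduction reaches the same conclusion with less computation, resting on the standard facts that continuity may be checked on a subbasis of the codomain and that the product topology is monotone in its factors. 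Both routes are complete; yours trades the paper's reliance on Lemma~\ref{hom-lemma} for a longer but elementary case analysis in (1), and is the more economical of the two in (2).
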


\begin{proof}
Let $\mt : S \times S \to S$ denote the multiplication map on $S$.

(1) It is easy to see that $\T_1$ is closed under unions and (finite) intersections, and is hence a topology on $S$. Since $I$ is an ideal, $(A) \mt^{-1} \subseteq (S\setminus I) \times (S\setminus I)$, and hence $(A) \mt^{-1}$ is open in the product topology on $S \times S$ induced by $\T_1$, for any $A \in \T_1 \setminus \{S\}$. It follows that $S$ is a topological semigroup with respect to $\T_1$.

Next, let $f: S \to S/I$ be the natural homomorphism. Then the elements of $\T_2$ are precisely the preimages under $f$ of the subsets open in the discrete topology on $S/I$. Thus $S$ is a topological semigroup with respect to $\T_2$, by Lemma~\ref{hom-lemma}. 

(2) Let $A \in \T_3$. Then $A = \bigcup_i (B_i \cap C_i)$ for some $B_i \in \T_1$ and $C_i \in \T_2$. Thus $$(A)\mt^{-1} = \bigcup_i ((B_i \cap C_i)\mt^{-1}) = \bigcup_i ((B_i)\mt^{-1} \cap (C_i)\mt^{-1}).$$ Since $S$ is a topological semigroup with respect to $\T_1$ and $\T_2$, for each $i$ we can write $(B_i)\mt^{-1}  = \bigcup_j (B_{ij} \times B'_{ij})$ and $(C_i)\mt^{-1}  = \bigcup_l (C_{il} \times C'_{il})$, for some $B_{ij}, B'_{ij} \in \T_1$ and $C_{il}, C'_{il} \in \T_2$. Thus $$(A)\mt^{-1} = \bigcup_i \bigcup_j \bigcup_l ((B_{ij} \times B'_{ij}) \cap (C_{il} \times C'_{il})) = \bigcup_i \bigcup_j \bigcup_l ((B_{ij} \cap C_{il}) \times (B'_{ij} \cap C'_{il})),$$ since it is easy to see that $$(X_1\times Y_1) \cap (X_2 \times Y_2) = (X_1 \cap X_2) \times (Y_1\cap Y_2)$$ for any sets $X_1$, $X_2$, $Y_1$, $Y_2$. Hence $(A)\mt^{-1}$ is open in the product topology on $S\times S$ induced by $\T_3$. It follows that $S$ is a topological semigroup with respect to $\T_3$.
\end{proof}

The two obvious topologies on a semigroup $S$ can be viewed as extremal cases of the constructions in Lemma~\ref{semi-top-lemma}(1). Specifically, if $I=S$, then $\T_1 = \T_2$ is the trivial topology, while if $I = \emptyset$, then $\T_1 = \T_2$ is the discrete topology. As we shall see in Proposition~\ref{top-chain-prop}, the two topologies $\T_1$ and $\T_2$ can also be distinct from each other.

Recall that a topological space $X$ is $T_1$ if for any two distinct points $x, y \in X$ there is an open neighbourhood of $x$ that does not contain $y$, and $X$ is $T_2$, or \emph{Hausdorff}, if for any two distinct points $x,y \in X$ there are open neighbourhoods $U$ and $V$ of $x$ and $y$, respectively, such that $U \cap V = \emptyset$. 

We conclude this section with an observation that, while interesting, will not be used in the rest of the paper.

\begin{prop}\label{prop-congruence}
Let $S$ be a topological semigroup with respect to a topology $\, \T$, define $$A_x = \bigcap\set{U}{U\in \T \text{ and } x \in U}$$ for every $x\in S$, and let $$\rho_S = \set{(x, y)}{A_x = A_y} \subseteq S\times S.$$ Then the following hold.
\begin{enumerate}
\item[$(1)$] The relation $\rho_S$ is a congruence on $S$. 
\item[$(2)$] If $\, \T$ is $T_1$, then $\rho_S = \set{(x, x)}{x \in S}$. 
\item[$(3)$] The topology $\, \T$ is contained in the least topology on $S$ with respect to which the natural homomorphism $S \to S/\rho_S$ is continuous, where $S/\rho_S$ is endowed with the discrete topology. 
\end{enumerate}
\end{prop}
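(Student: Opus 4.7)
The plan rests on the following reformulation of the defining equality: $A_x = A_y$ if and only if $x$ and $y$ lie in precisely the same members of $\T$. Indeed, if $x \in U \in \T$ then $A_x \subseteq U$ by the very definition of $A_x$, whence $y \in A_y = A_x \subseteq U$; the converse direction is symmetric. All three parts of the proposition will be short applications of this characterization.

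For part~(1), $\rho_S$ is automatically an equivalence relation, being defined by an equality of sets; the substantive content is compatibility with multiplication. Given $(x,y), (u,v) \in \rho_S$ and $U \in \T$ with $xu \in U$, continuity of $\mt$ at $(x,u)$ yields open sets $V \ni x$ and $W \ni u$ with $VW \subseteq U$. The reformulation gives $y \in V$ and $v \in W$, so $yv \in VW \subseteq U$; the reverse implication follows by symmetry. Therefore $xu$ and $yv$ lie in the same open sets, i.e., $A_{xu} = A_{yv}$, so $(xu, yv) \in \rho_S$.

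For part~(2), the $T_1$ hypothesis provides, for each pair $x \neq y$, an open set containing $x$ but not $y$, and the reformulation immediately gives $A_x \neq A_y$. For part~(3), I would invoke Lemma~\ref{hom-lemma} applied to the natural map $f : S \to S/\rho_S$ with $S/\rho_S$ carrying the discrete topology: the least topology making $f$ continuous is $\{(A)f^{-1} : A \subseteq S/\rho_S\}$, i.e., the collection of all $\rho_S$-saturated subsets of $S$. It then suffices to check that every $U \in \T$ is such a union of $\rho_S$-classes, which follows at once from the reformulation, since $x \in U$ and $(x,y) \in \rho_S$ force $y \in U$.

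The only step requiring any real work is the congruence-compatibility assertion in~(1), and even that amounts to a single routine use of the continuity of $\mt$ on a basic open neighbourhood; parts~(2) and~(3) are then essentially immediate from the open-set characterization of $\rho_S$.
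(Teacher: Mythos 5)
Your proof is correct and follows essentially the same route as the paper's: the open-set characterization of $\rho_S$, continuity of multiplication to get compatibility in (1), and Lemma~\ref{hom-lemma} plus the observation that open sets are $\rho_S$-saturated for (3). The only cosmetic difference is that in (1) you verify the two-variable compatibility $(xu,yv)\in\rho_S$ directly via joint continuity, whereas the paper checks left- and right-compatibility separately; the two formulations of ``congruence'' are equivalent for an equivalence relation.
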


\begin{proof}
(1) It is clear that $\rho_S$ is an equivalence relation on $S$, and so it suffices to show that $(xs, ys), (sx, sy)\in \rho_S$ for all $s\in S$ and $(x,y) \in \rho_S$. Thus suppose that $(x,y) \in \rho_S$, and let $U$ is an open neighbourhood of $xs$, for some $s \in S$. By the continuity of multiplication, there exist open neighbourhoods $V$ and $W$ of $x$ and $s$, respectively, such that $VW \subseteq U$. But $A_x = A_y$, and so, in particular, $y \in V$. Thus $ys\in VW \subseteq U$, which shows that $ys \in A_{xs}$. By symmetry, also $xs \in A_{ys}$, giving $(xs, ys)\in \rho_S$. The proof that $(sx, sy)\in \rho_S$ is dual.

(2) If $\T$ is $T_1$, then clearly $A_x = \{x\}$ for all $x \in S$, which implies that  $\rho_S = \set{(x, x)}{x \in S}$.

(3) Let $f: S \to S/\rho_S$ be the natural homomorphism defined by $(s)f = s/\rho_S$, and let $$\P= \set{(V)f^{-1}}{V\subseteq S/\rho_S}$$ be the least topology such that $f$ is continuous (by Lemma~\ref{hom-lemma}). It is straightforward to verify that $U\in \P$ if and only if $U$ is a union of $\rho_S$-classes. Hence to prove that $\T \subseteq \P$, it suffices to show that if $x\in V\in \T$, then $y\in V$ for all $y\in S$ such that $(x, y)\in \rho_S$. But if $(x, y)\in \rho_S$, then every open neighbourhood of $x$ is also an open neighbourhood of $y$, and hence $y\in V$, as required.
\end{proof}

\section{The pointwise topology}

Given a set $\Omega$, we denote by $\transO$ the \emph{full transformation monoid} of $\Omega$, consisting of all functions from $\Omega$ to $\Omega$, under composition. The \emph{pointwise} (or \emph{function}, or \emph{finite}) \emph{topology} on $\transO$ has a base of open sets of the following form: $$[\sigma : \tau] = \set{f \in \transO}{(\sigma_i)f = \tau_i \text{ for all } 0 \leq i\leq n},$$ where $\sigma=(\sigma_0, \sigma_1, \dots, \sigma_n)$ and $\tau=(\tau_0, \tau_1, \dots, \tau_n)$ are sequences of elements of $\Omega$, and $n \in \N$ (the set of the natural numbers). It is straightforward to see that this coincides with the product topology on $\transO = \prod_\Omega \Omega$, where each component set $\Omega$ is endowed with the discrete topology.  As a product of discrete spaces, this space is Hausdorff. It is well-known and easy to see that $\transO$ is a topological semigroup with respect to the pointwise topology. If $\Omega$ is finite, then $\transO$ is discrete in this topology. Finally, it is a standard fact that if $\Omega$ is countable, then the pointwise topology on $\transO$ is Polish, i.e., separable and completely metrisable (see, e.g.,~\cite[Section 3.A, Example 3]{Kechris}).

We are now ready to prove an analogue of a result of Gaughan~\cite[Theorem 1]{Gaughan} for infinite symmetric groups, which shows that the pointwise topology is the weakest Hausdorff semigroup topology on $\transO$. 

Recall that the \emph{rank} of a function $f \in \transO$ is the cardinality of the image $(\Omega)f$ of $f$.

\begin{theorem} \label{point-finer}
Let $\, \Omega$ be a set, let $S$ be a subsemigroup of $\, \transO$ that contains all the transformations of rank at most $\, 2$, and let $\, \T$ be a topology on $S$ with respect to which $S$ is a semitopological semigroup. Then the following are equivalent.
\begin{enumerate}
\item[$(1)$] $\, \T$ is Hausdorff.
\item[$(2)$] $\, \T$ is $T_1$.
\item[$(3)$] Every set of the form $\, [\sigma : \tau] \cap S$ is open in $\, \T$. \emph{(}I.e., $\, \T$ contains the topology induced on $S$ by the pointwise topology on $\, \transO$.\emph{)}
\item[$(4)$] Every set of the form $\, [\sigma : \tau] \cap S$ is closed in $\, \T$.
\end{enumerate}
\end{theorem}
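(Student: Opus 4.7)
The plan is to prove the cycle $(1) \Rightarrow (2) \Rightarrow (3) \Rightarrow (1)$ and separately $(3) \Leftrightarrow (4)$. Three of these implications are routine: $(1) \Rightarrow (2)$ is immediate; $(3) \Rightarrow (1)$ holds because the pointwise topology is already Hausdorff, so two distinct $f, g \in S$ differing at some $\omega$ (with $(\omega)f = \alpha \neq \beta = (\omega)g$) are separated by the disjoint $\T$-opens $[\omega:\alpha] \cap S$ and $[\omega:\beta] \cap S$; and $(3) \Rightarrow (4)$ holds because the complement in $S$ of $[\sigma:\tau] \cap S$ equals $\bigcup_i \bigcup_{\gamma \neq \tau_i} ([\sigma_i:\gamma] \cap S)$, a union of sets open by $(3)$. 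For $(4) \Rightarrow (2)$: given distinct $f, g \in S$, choose $\omega$ with $(\omega)f = \alpha \neq (\omega)g$; then the complement of the closed set $[\omega:\alpha] \cap S$ is an open neighbourhood of $g$ avoiding $f$, and dually for $f$.

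The heart of the argument is $(2) \Rightarrow (3)$. Since $[\sigma:\tau] \cap S = \bigcap_i ([\sigma_i:\tau_i] \cap S)$ is a finite intersection, it suffices to show each single-coordinate set $[\omega:\alpha] \cap S$ is open. Let $c_\omega \in S$ denote the constant map with value $\omega$; the composition law yields $c_\omega f = c_{(\omega)f}$ for every $f \in S$, so $[\omega:\alpha] \cap S = \lt_{c_\omega}^{-1}(\{c_\alpha\})$. Assuming $|\Omega| \geq 2$ (otherwise the theorem is vacuous), pick any $\beta \neq \alpha$ and let $e \in S$ be the rank-$2$ idempotent with $(\alpha)e = \alpha$ and $(\mu)e = \beta$ for all $\mu \neq \alpha$, available by hypothesis. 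A direct computation shows $fe = c_\beta$ iff $\alpha \notin \im(f)$. Since $\{c_\beta\}$ is closed by $T_1$ and $\rt_e$ is continuous by the semitopological assumption, the set
\[
U := \{f \in S : \alpha \in \im(f)\} = S \setminus \rt_e^{-1}(\{c_\beta\})
\]
is $\T$-open. Because $c_\omega f \in U$ iff $\alpha \in \{(\omega)f\}$ iff $(\omega)f = \alpha$, we obtain $\lt_{c_\omega}^{-1}(U) = [\omega:\alpha] \cap S$, which is $\T$-open by continuity of $\lt_{c_\omega}$.

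The main obstacle is producing the open set $U$: a single closed point $\{c_\beta\}$ is not on its own enough to isolate $c_\alpha$ from the potentially infinitely many constant maps, so the rank-$2$ idempotent $e$ is used to funnel every ``wrong'' constant through right multiplication onto $c_\beta$, converting the $T_1$ hypothesis at a single point into a usable open neighbourhood of $c_\alpha$. This is precisely where the assumption that $S$ contains all rank-$2$ transformations is essential; the rest of the proof is a formal matter of continuity and taking complements.
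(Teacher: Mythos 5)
Your proof is correct and follows essentially the same route as the paper's: the decisive step $(2)\Rightarrow(3)$ in both cases realises the complement of $[\omega:\alpha]\cap S$ as the preimage of a closed constant-function singleton under a composite $\lt_{c}\circ\rt_{e}$ with $c$ of rank $1$ and $e$ of rank $2$, which is exactly where the hypothesis on $S$ enters. The only differences are cosmetic repackagings (you factor through the open set $\{f:\alpha\in\im(f)\}$ and verify $T_1$ by separating pairs of points rather than by writing singletons as intersections of closed sets).
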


\begin{proof} 
We may assume that $\Omega$ is infinite, since each of the conditions (1)--(4) is equivalent to $\T$ being discrete in the case where $\Omega$ is finite. We note, however, that our arguments below only require that $\Omega$ contains at least $2$ elements. We shall prove that $(1) \Rightarrow (2) \Rightarrow (3) \Rightarrow (1)$ and $(3) \Rightarrow (4) \Rightarrow (2)$.

$(1) \Rightarrow (2)$ This is a tautology.

$(2) \Rightarrow (3)$ Assuming that $\T$ is $T_1$, we start by showing that for any $\alpha,\beta \in \Omega$, the set $$F_{\alpha,\beta}=\set{f\in S}{(\alpha)f\not=\beta}$$ is closed in $\T$. Let $\gamma \in \Omega \setminus\{\alpha\}$, let $f\in S$ be the constant function with image $\alpha$, and define $g\in S$ by 
$$(\delta)g = \left\{ \begin{array}{ll}
\gamma & \text{if } \, \delta=\beta\\
\alpha & \text{if } \, \delta \neq \beta
\end{array}\right..$$
Since $\T$ is $T_1$, the singleton $\{f\}$ is closed, and since $S$ is semitopological with respect to $\T$, the composite $\lt_f\circ \rt_g: S \rightarrow S$ of multiplication maps is continuous. Hence $$(f)(\lt_f\circ \rt_g)^{-1}=\set{h\in S}{fhg=f}=F_{\alpha,\beta}$$ must be closed in $\T$.
 
Let $\sigma=(\sigma_0, \sigma_1, \dots, \sigma_n)$ and $\tau=(\tau_0, \tau_1, \dots, \tau_n)$ be arbitrary sequences of elements of $\Omega$, of the same finite length. Then $$S \setminus ([\sigma : \tau]\cap S) = \bigcup_{i=0}^n F_{\sigma_i,\tau_i},$$ is closed, being a finite union of closed sets. Thus $[\sigma : \tau] \cap S$ is open in $\T$.

$(3) \Rightarrow (1)$ Since the pointwise topology on $\transO$ is Hausdorff, so is the topology it induces on $S$, and hence so is any topology on $S$ that contains this induced topology.

$(3) \Rightarrow (4)$ Let $\sigma=(\sigma_0, \sigma_1, \dots, \sigma_n)$ and $\tau=(\tau_0, \tau_1, \dots, \tau_n)$ be arbitrary finite sequences of elements of $\Omega$, of length $n+1$. Then $$S \setminus ([\sigma : \tau]\cap S) = \bigcup_{\phi \neq \tau} ([\sigma : \phi] \cap S),$$ where $\phi=(\phi_0, \phi_1, \dots, \phi_n)$ runs over all sequences of elements of $\Omega$ of length $n+1$, distinct from $\tau$. Thus, if each $[\sigma : \phi] \cap S$ is open in $\T$, then so is $S \setminus ([\sigma : \tau]\cap S)$. In this case $[\sigma : \tau] \cap S$ must be closed in $\T$.

$(4) \Rightarrow (2)$ Let $f \in S$ be any function. Then $$\{f\} = \bigcap_{\alpha \in \Omega} \{g \in S : (\alpha)g = (\alpha)f\} = \bigcap_{\alpha \in \Omega} ([(\alpha) : ((\alpha)f)]\cap S).$$ Thus, if (4) holds, then $\{f\}$ is closed in $\T$. Since $f \in S$ was arbitrary, it follows that $\T$ is $T_1$.
\end{proof}

We shall show in Proposition~\ref{top-chain-prop} that $\transO$ has many Hausdorff semigroup topologies different from the pointwise one.

Next we give an example of a semitopological subsemigroup $S$ of $\transO$ with a topology that is $T_1$ but not Hausdorff, to show the necessity of the hypothesis on $S$ in Theorem~\ref{point-finer}.

Given a set $X$ we denote by $|X|$ the cardinality of $X$.

\begin{eg}
Let $\Omega$ be an infinite set, let $S$ be an infinite subsemigroup of $\transO$ consisting of bijections (so $S$ is a subsemigroup of $\symO$), and let $\T$ be the cofinite topology on $S$. That is, $\T$ consists of precisely $\emptyset$ and the cofinite subsets of $S$ (i.e., $X \subseteq S$ such that $|S\setminus X| < \aleph_0$). Clearly, $\T$ is $T_1$. (In fact, the cofinite topology is the weakest $T_1$ topology on any set.) On the other hand, since $S$ is infinite, the intersection of any two nonempty elements of $\T$ is infinite, and hence $\T$ is not Hausdorff. We shall show that $S$ is semitopological with respect to $\T$.

Let $U \in \T \setminus \{S\}$ be a nonempty open set, write $S\setminus U = \{g_0, \dots, g_n\}$ for some $n \in \N$, and let $f \in S$. Since $f$ is a bijection, and hence invertible both on the left and the right (in $\symO$), for each $i \in \{0, \dots, n\}$ there can be at most one $h \in S$ such that $fh = g_i$, and at most one $h \in S$ such that $hf = g_i$. Hence $$(U)\lt_f^{-1} = \set{h\in S}{fh \in U} = \set{h \in S}{fh \notin \{g_0, \dots, g_n\}},$$ and therefore $|S \setminus (U)\lt_f^{-1}| \leq n+1$. Similarly, $|S \setminus (U)\rt_f^{-1}| \leq n+1$. It follows that $(U)\lt_f^{-1}, (U)\rt_f^{-1} \in \T$ for all $U \in \T$, and hence $S$ is semitopological with respect to $\T$. \hfill \qedsymbol
\end{eg}

Given a set $\Omega$, the \emph{pointwise topology} on the group $\symO$ of all permutations of $\Omega$ is the subspace topology induced on $\symO$ by the pointwise topology on $\transO$. It is well-known and easy to see that $\symO$ is a topological group with respect to the pointwise topology. (That is, $\symO$ is a topological semigroup with respect to this topology, and the inversion map $\symO \to \symO$ is continuous.) Moreover, as with $\transO$, if $\Omega$ is countable, then the pointwise topology on $\symO$ is Polish (see, e.g.,~\cite[Section 9.A, Example 7]{Kechris}).

It turns out that the pointwise topology on $\transO$ is the only $T_1$ semigroup topology that induces the pointwise topology on $\symO$.

\begin{prop} \label{group-top}
Let $\, \Omega$ be a set, and let $\, \P$ denote the pointwise topology on $\, \transO$. Suppose that $\, \transO$ is a semitopological semigroup with respect to a $T_1$ topology $\, \T$, and that $\, \T$ induces a topology on $\, \symO$ that is contained in the pointwise topology. Then $\, \T = \P$.
\end{prop}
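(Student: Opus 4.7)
The plan is to show both inclusions $\P \subseteq \T$ and $\T \subseteq \P$. The first follows at once from Theorem~\ref{point-finer} applied to $S = \transO$: since $\T$ is $T_1$ and $\transO$ trivially contains every transformation of rank at most $2$, condition $(2)$ of that theorem holds, so by condition $(3)$ every basic pointwise open set is $\T$-open. Combined with the standing hypothesis $\T|_{\symO} \subseteq \P|_{\symO}$, this yields $\T|_{\symO} = \P|_{\symO}$, so the subspace topology inherited by $\symO$ from $\T$ coincides with the pointwise topology.

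For the reverse inclusion $\T \subseteq \P$, I would fix $U \in \T$ and $f \in U$ and try to produce a finite sequence $\sigma = (\sigma_0,\dots,\sigma_n)$ with $f \in [\sigma:\tau] \subseteq U$, where $\tau_i = (\sigma_i)f$. Using that $\T$ is semitopological, the map $\lt_f : \transO \to \transO$, $h \mapsto fh$, is $\T$-continuous, so the preimage $V = (U)\lt_f^{-1} = \{h \in \transO : fh \in U\}$ is a $\T$-open neighbourhood of $\id$. Hence $V \cap \symO$ is a $\P$-open neighbourhood of $\id$ in $\symO$, and so there is a finite set $F \subseteq \Omega$ such that every $g \in \symO$ fixing $F$ pointwise satisfies $fg \in U$. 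Running the same argument with $\rt_f$ in place of $\lt_f$ supplies a finite $F' \subseteq \Omega$ such that $gf \in U$ for every $g \in \symO$ fixing $F'$ pointwise.

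The principal obstacle is converting these one-sided orbit statements into a genuine pointwise neighbourhood of $f$ contained in $U$. For every $h \in \transO$ agreeing with $f$ on a sufficiently large finite set $F'' \supseteq F \cup F'$, I must witness $h \in U$ by producing permutations $g_1, g_2 \in \symO$, with $g_1$ fixing $F'$ and $g_2$ fixing $F$, such that $h = g_1 f g_2$ (or, when the kernel structure allows, $h = fg$ or $h = gf$). Such a factorisation forces $f$ and $h$ to share the shape of their kernel partitions and the cardinality of their images, which is not automatic from finite pointwise agreement. I would handle this by enlarging $F''$ to adjoin well-chosen representatives of enough fibres of $f$ on both the domain and image sides, and then use the infiniteness of $\Omega$ to extend the partial bijections dictated by $h$ and $f$ into the required permutations $g_1, g_2$. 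The delicate step is to make this enlargement uniform in $h$, so that a single $F''$ works for every $h$ in the resulting basic pointwise open set, yielding the inclusion $[\sigma:\tau] \subseteq U$ needed to close the argument.
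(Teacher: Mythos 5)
Your first half ($\P \subseteq \T$ via Theorem~\ref{point-finer}, hence $\T|_{\symO} = \P|_{\symO}$) matches the paper and is fine. The second half, however, rests on a strategy that cannot be completed. You propose to witness $h \in U$ for every $h$ agreeing with $f$ on a finite set $F''$ by factoring $h = g_1 f g_2$ with $g_1, g_2 \in \symO$. But any $h$ of this form has the same rank and the same kernel class type as $f$, since pre- and post-composition with bijections preserve both. A basic pointwise neighbourhood $[\sigma:\tau]$ of $f$ always contains transformations whose rank and kernel structure differ from those of $f$ (e.g.\ extend $f|_{\{\sigma_0,\dots,\sigma_n\}}$ to a map that is constant off $\{\sigma_0,\dots,\sigma_n\}$, or injective off it, as appropriate). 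Hence $[\sigma:\tau] \not\subseteq \symO f \symO \cup f\symO \cup \symO f$ no matter how you enlarge $F''$, and the inclusion $[\sigma:\tau] \subseteq U$ cannot be extracted this way. You correctly flag this as the ``principal obstacle,'' but it is not a delicate step to be finessed --- it is an obstruction that kills the approach.

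The paper's proof reverses the roles of the fixed and variable factors. It fixes $g_1, g_2 \in \transO$ (not permutations): $g_1$ injective with $|\Omega \setminus (\Omega)g_1| = |\Omega|$, and $g_2$ collapsing each block of a partition of $\Omega$ into $|\Omega|$ blocks of size $|\Omega|$. The key identity is $g_1 \symO g_2 = \transO$: \emph{every} transformation, of any rank and kernel type, factors as $g_1 p g_2$ with $p$ a permutation, precisely because the collapsing map $g_2$ can create arbitrary kernels and the co-large image of $g_1$ leaves $p$ enough room. Writing $f = g_1 h g_2$ with $h \in \symO$, one pulls $U$ back through the continuous map $\lt_{g_1} \circ \rt_{g_2}$ to get a $\T$-open neighbourhood of $h$, intersects with $\symO$ to get (by the hypothesis on $\T|_{\symO}$) a pointwise-basic neighbourhood $V$ of $h$ determined by finitely many points $\beta_0, \dots, \beta_n$, and then checks that the basic pointwise neighbourhood of $f$ determined by $(\{\beta_0,\dots,\beta_n\})g_1^{-1}$ is contained in $g_1 V g_2 \subseteq U$. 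If you want to salvage your write-up, you should replace your $\lt_f$, $\rt_f$ argument with this factorisation; the one-sided translates of $f$ by stabilisers of finite sets do not suffice.
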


\begin{proof}
We may assume that $\Omega$ is infinite, since otherwise the only $T_1$ topology on $\transO$ is the discrete topology, which coincides with $\P$ in this case.

Write $\Omega = \bigcup_{\alpha \in \Omega} \Sigma_{\alpha}$, where the union is disjoint, and $|\Sigma_{\alpha}| = |\Omega|$ for each $\alpha \in \Omega$. Let $g_1 \in \transO$ be an injective function such that $|\Omega \setminus (\Omega)g_1| = |\Omega|$, and let $g_2 \in \transO$ be the function that takes each $\Sigma_{\alpha}$ to $\alpha$. Then it is easy to show that $g_1\symO g_2 = \transO$ (see the proof of~\cite[Theorem 12]{ZM} for the details).

Let $U \in \T$ be a nonempty open set, and let $f \in U$. Also let $h \in \symO$ be such that $g_1hg_2 = f$. Then $h \in (U)(\lt_{g_1} \circ \rt_{g_2})^{-1} \in \T$, and hence there is an open neighbourhood $V \subseteq \symO \cap (U)(\lt_{g_1} \circ \rt_{g_2})^{-1}$ of $h$ in the topology induced on $\symO$ such that $$V \supseteq \set{g \in \symO}{(\beta_0)g=(\beta_0)h, \dots, (\beta_n)g=(\beta_n)h},$$ for some distinct $\beta_0, \dots, \beta_n \in \Omega$. Let $\Xi = (\{\beta_0, \dots, \beta_n\})g_1^{-1}$, and let $$W = \set{g \in \transO}{(\alpha)g = (\alpha)g_1hg_2 \text{ for all } \alpha \in \Xi}.$$ Then for each $g \in W$, there exists $p \in V$ such that $(\alpha)g_1p \in \Sigma_{(\alpha)g}$ for all $\alpha \in \Omega$, and hence $g = g_1pg_2 \in g_1Vg_2$. Thus $f \in W \subseteq g_1V g_2 \subseteq U$. Since $f \in U$ was arbitrary and $W \in \P$, this implies that $U \in \P$, and hence $\T \subseteq \P$. Finally, since $\T$ was assumed to be $T_1$, we conclude that $\T = \P$, by Theorem~\ref{point-finer}.
\end{proof}

Recall that a topological space is \emph{separable} if it contains a countable dense subset. Kechris and Rosendal showed in~\cite[Theorem 6.26]{KR} that the pointwise topology is the only nontrivial separable group topology on $\symO$, when $\Omega$ is countably infinite. In particular, the pointwise topology is the only Polish group topology on $\Omega$ in this situation, since any Polish topology must be Hausdorff, and hence nontrivial. Using Proposition~\ref{group-top}, we can prove the analogous statement for $\transO$.

\begin{theorem} \label{polish-thrm}
Let $\Omega$ be a countably infinite set. Then the pointwise topology is the only Polish topology on $\, \transO$ with respect to which it is a semitopological semigroup.
\end{theorem}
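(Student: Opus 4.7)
The plan is to apply Proposition~\ref{group-top} after reducing to the known uniqueness of the pointwise topology on $\symO$. Let $\T$ be a Polish topology on $\transO$ with respect to which $\transO$ is a semitopological semigroup. Since $\T$ is Polish it is Hausdorff, and Theorem~\ref{point-finer} gives $\T \supseteq \P$, where $\P$ denotes the pointwise topology on $\transO$; in particular, the induced topology $\T|_{\symO}$ contains the pointwise topology on $\symO$.

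Next I would show that $(\symO, \T|_{\symO})$ is a Polish topological group. Since $\Omega$ is countable, $\symO$ is a $G_\delta$ subset of $(\transO, \P)$: a transformation is bijective if and only if it is both injective and surjective, yielding
\[\symO = \bigcap_{\alpha \ne \beta} \{f \in \transO : (\alpha)f \ne (\beta)f\} \cap \bigcap_{\gamma \in \Omega} \bigcup_{\alpha \in \Omega} \{f \in \transO : (\alpha)f = \gamma\},\]
a countable intersection of $\P$-open sets. Since $\T \supseteq \P$, this also exhibits $\symO$ as a $G_\delta$ in $(\transO, \T)$, so $(\symO, \T|_{\symO})$ is Polish. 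Separate continuity of multiplication descends to the subgroup $\symO$, so $(\symO, \T|_{\symO})$ is a Polish semitopological group; invoking Bouziad's theorem that every \v{C}ech-complete semitopological group is a topological group, it is in fact a Polish topological group.

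Now the Kechris--Rosendal uniqueness result recalled just before the theorem forces $\T|_{\symO}$ to coincide with the pointwise topology on $\symO$, and Proposition~\ref{group-top} then yields $\T = \P$. The main obstacle is the upgrade from semitopological to topological group structure on $(\symO, \T|_{\symO})$, since without it the Kechris--Rosendal theorem, phrased for separable group topologies, cannot be invoked directly; this step relies on a nontrivial result of Ellis--Bouziad type from the theory of topological groups, and is the subtle point in an otherwise clean assembly of Theorem~\ref{point-finer}, Proposition~\ref{group-top}, and the cited uniqueness result for $\symO$.
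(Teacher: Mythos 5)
Your proposal is correct and follows essentially the same route as the paper: establish $\T \supseteq \P$ via Theorem~\ref{point-finer}, exhibit $\symO$ as a $G_\delta$ (hence Polish) subspace of $(\transO,\T)$, upgrade the Polish semitopological group $(\symO,\T|_{\symO})$ to a topological group, apply the Kechris--Rosendal uniqueness theorem, and finish with Proposition~\ref{group-top}. The only difference is the reference used for the upgrade step: the paper invokes Montgomery's 1936 theorem that a group which is a semitopological semigroup in a Polish topology is a topological group, whereas you cite the Ellis--Bouziad theorem on \v{C}ech-complete semitopological groups, which is an equally valid (indeed more general) justification.
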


\begin{proof}
Let $\P$ denote the pointwise topology on $\transO$, and let $\T$ be any Polish topology on $\transO$ with respect to which it is a semitopological semigroup. Then, in particular, $\T$ is Hausdorff and so $\T\supseteq \P$, by Theorem \ref{point-finer}.

It is a standard fact (see, e.g.,~\cite[Theorem 3.11]{Kechris}) that a subspace of a Polish space is Polish if and only if it is $G_{\delta}$ (i.e., a countable intersection of open sets). Since $\symO$ is Polish in the topology induced by $\P$, it follows that $\symO$ is a $G_{\delta}$ subspace of $\transO$ with respect to $\P$. From the fact that $\T \supseteq \P$, we conclude that $\symO$ is also a $G_{\delta}$ subspace of $\transO$ with respect to $\T$, and hence $\symO$ is a Polish space in the topology induced by $\T$. 
Since $\transO$ is a semitopological semigroup with respect to $\T$, it is easy to see that the same holds for $\symO$ with respect to the topology induced by $\T$, and therefore it is a Polish semigroup in this topology.

According to a result of Montgomery~\cite[Theorem 2]{Montgomery}, if a group is a semitopological semigroup with respect to a Polish topology, then it is a topological group with respect to that topology. Hence $\symO$ is a Polish group in the topology induced by $\T$, and therefore this topology must be the pointwise topology on $\symO$, by~\cite[Theorem 6.26]{KR}. Finally, Proposition~\ref{group-top} implies that $\T=\P$.
\end{proof}

We note that~\cite[Theorem 6.26]{KR} and Proposition~\ref{group-top} imply a stronger statement than what is used in the final paragraph of the above proof. Specifically, the pointwise topology is the only $T_1$ semitopological semigroup topology on $\transO$ that induces a separable group topology on $\symO$, when $\Omega$ is countably infinite. To complement this observation and Theorem~\ref{polish-thrm}, we record the following fact.

\begin{cor} \label{separable-cor}
Let $\, \Omega$ be an uncountable set. Then $\, \transO$ does not admit a separable $T_1$ topology with respect to which it is a semitopological semigroup.
\end{cor}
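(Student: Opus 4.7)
The plan is to argue by contradiction, reducing to the fact that the pointwise topology on $\transO$ itself fails to be separable when $\Omega$ is uncountable. So suppose $\T$ is a separable $T_1$ topology on $\transO$ with respect to which $\transO$ is a semitopological semigroup, and let $D \subseteq \transO$ be a countable $\T$-dense subset. Write $\P$ for the pointwise topology on $\transO$. Since $\transO$ trivially contains all transformations of rank at most $2$, Theorem~\ref{point-finer} applied with $S = \transO$ yields $\P \subseteq \T$.

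Next I would observe that $D$ remains dense in the weaker topology $\P$: every nonempty $\P$-open set is also $\T$-open, hence meets $D$. So it suffices to show that $(\transO, \P)$ is not separable. For this, fix any $\alpha \in \Omega$ and consider the family of basic open sets $\{[\alpha : \beta] : \beta \in \Omega\}$. These sets are pairwise disjoint and each is nonempty, as $[\alpha : \beta]$ contains the constant function with value $\beta$. Density of $D$ forces $D \cap [\alpha : \beta] \neq \emptyset$ for every $\beta \in \Omega$, which means that the evaluation map $D \to \Omega$, $f \mapsto (\alpha)f$, is surjective. Since $D$ is countable and $\Omega$ is uncountable, this is a contradiction, completing the proof.

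There is no real obstacle here: once Theorem~\ref{point-finer} has been established, the separability of $\T$ transfers down to $\P$, and the pointwise topology on $\transO$ manifestly fails to be separable by the pigeonhole argument on the partition $\{[\alpha : \beta] : \beta \in \Omega\}$. The only subtlety worth flagging is the direction of the inclusion $\P \subseteq \T$, which goes precisely the right way to make dense sets push down from $\T$ to $\P$ rather than the reverse.
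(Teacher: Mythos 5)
Your proposal is correct and follows essentially the same route as the paper: invoke Theorem~\ref{point-finer} to get $\P \subseteq \T$, then observe that a countable dense set would have to meet every basic pointwise-open set $[(\alpha):(\beta)]$, which is impossible since evaluation at $\alpha$ cannot map a countable set onto the uncountable set $\Omega$. The paper phrases the final step as choosing $\beta \in \Omega \setminus (\alpha)X$ rather than as surjectivity of the evaluation map, but this is the same argument.
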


\begin{proof}
Suppose that $\transO$ is a semitopological semigroup with respect to a $T_1$ topology $\T$, and that $X \subseteq \transO$ is countable and dense in $\T$. By Theorem~\ref{point-finer}, $\T$ contains the pointwise topology, and hence each subset of $\transO$ of the form $[\sigma : \tau]$ contains an element of $X$. Letting $\alpha \in \Omega$ be any element, we can find some $\beta \in \Omega \setminus (\alpha)X$, since $|(\alpha)X| \leq \aleph_0$. Hence $[(\alpha) : (\beta)] \cap X = \emptyset$, producing a contradiction.
\end{proof}

\section{Topologies with isolated points}

It is well-known (see~\cite[Lemma 1]{Malcev}) that the proper ideals of $\transO$ are precisely the subsemigroups of the form $$I_\lambda = \set{f\in \transO}{|(\Omega) f|<\lambda},$$ where $\lambda$ is a cardinal satisfying $1 \leq \lambda \leq |\Omega|$. This fact allows us to construct $|\Omega|$ distinct Hausdorff semigroup topologies on $\transO$ (containing the pointwise topology). These topologies, along with all the others considered in this section, have \emph{isolated} points, i.e., points $x$ such that $\{x\}$ is open.

\begin{prop} \label{top-chain-prop}
Let $\, \Omega$ be an infinite set, let $\, \T_P$ denote the pointwise topology on $\, \transO$, and for each nonempty proper ideal $I_\lambda$ of $\, \transO$, let $$\, \T_{I_\lambda^1} = \set{A}{A \subseteq \transO\setminus I_\lambda}\cup\{\transO\}$$ and $$\, \T_{I_\lambda^2} = \T_{I_\lambda^1} \cup \set{A \cup I_\lambda}{A \in \T_{I_\lambda^1}}.$$ Also let $\, \T_{P \cup I_\lambda^i}$ be the topology on $\, \transO$ generated by $\, \T_P$ and $\, \T_{I_\lambda^i}$ $\, (i \in \{1,2\}, \ 1 < \lambda \leq |\Omega|)$. Then the following diagram consists of Hausdorff topologies with respect to which $\, \transO$ is a topological semigroup:
$$\begin{array}{ccccccccccc}
\T_P & \subset & \cdots & \subset & \T_{P \cup I_\lambda^2} & \subset & \cdots & \subset & \T_{P \cup I_3^2} & \subset & \T_{P \cup I_2^2}\\
 \verteq &&&& \cup &&&& \cup && \cup \\
\T_P & \subset & \cdots & \subset & \T_{P \cup I_\lambda^1} & \subset & \cdots & \subset & \T_{P \cup I_3^1} & \subset & \T_{P \cup I_2^1} \\
\end{array}$$ $(1 < \lambda \leq |\Omega|)$.
Moreover, $\T_{P \cup I_2^2}$ is the discrete topology.
\end{prop}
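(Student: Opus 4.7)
The plan is to rely on the machinery of Lemma~\ref{semi-top-lemma} for the topological semigroup and inclusion claims, and then to produce explicit open sets to witness strict containment and discreteness. First, Lemma~\ref{semi-top-lemma}(1) shows that $\transO$ is a topological semigroup with respect to each $\T_{I_\lambda^i}$, and it has already been recalled that the same holds for $\T_P$. Applying Lemma~\ref{semi-top-lemma}(2) then gives the topological semigroup property for each join $\T_{P \cup I_\lambda^i}$, and since $\T_P \subseteq \T_{P \cup I_\lambda^i}$ and $\T_P$ is Hausdorff, each such topology is Hausdorff.

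For the inclusions in the diagram, the vertical ones follow from $\T_{I_\lambda^1} \subseteq \T_{I_\lambda^2}$ by definition, while the horizontal ones reduce to checking $\T_{I_\lambda^i} \subseteq \T_{I_\mu^i}$ whenever $\mu \leq \lambda$. This follows because $I_\mu \subseteq I_\lambda$ yields $\transO \setminus I_\lambda \subseteq \transO \setminus I_\mu$; hence any subset of $\transO \setminus I_\lambda$ is already a subset of $\transO \setminus I_\mu$, and any set $A \cup I_\lambda$ with $A \subseteq \transO \setminus I_\lambda$ can be rewritten as $(A \cup (I_\lambda \setminus I_\mu)) \cup I_\mu$, which lies in $\T_{I_\mu^2}$.

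For the strictness of a horizontal inclusion $\T_{P \cup I_\lambda^i} \subsetneq \T_{P \cup I_\mu^i}$ with $\mu < \lambda$ (or of the leftmost $\T_P \subsetneq \T_{P \cup I_\lambda^i}$), the idea is to pick an $f \in \transO$ of some rank $\nu$ with $\mu \leq \nu < \lambda$, respectively any $f$ of rank at least $\lambda$. Then $\{f\}$ is open in the finer topology because $\{f\} \in \T_{I_\mu^1}$ (respectively $\T_{I_\lambda^1}$). On the other hand $f \in I_\lambda$, so the only members of $\T_{I_\lambda^i}$ that contain $f$ are $\transO$ and, for $i=2$, sets containing all of $I_\lambda$; intersecting either with a basic $\T_P$-neighbourhood $[\sigma : \tau]$ of $f$ cannot reduce to $\{f\}$: in the $i=1$ case because $\T_P$ has no isolated points on an infinite $\Omega$, and in the $i=2$ case because modifying $f$ at a single coordinate outside $\{\sigma_0, \dots, \sigma_n\}$ to another value already in the image of $f$ yields a distinct function of rank at most $\nu < \lambda$ still agreeing with $f$ on $\sigma$. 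The vertical strictness $\T_{P \cup I_\lambda^1} \subsetneq \T_{P \cup I_\lambda^2}$ is witnessed by $I_\lambda$ itself, which lies in $\T_{I_\lambda^2}$; for any constant $c_\alpha \in I_\lambda$, a basic $\T_P$-neighbourhood $[\sigma : (\alpha, \dots, \alpha)]$ of $c_\alpha$ can be extended by a bijection of $\Omega \setminus \{\sigma_0, \dots, \sigma_n\}$ onto $\Omega \setminus \{\alpha\}$ to produce a function of rank $|\Omega|$ outside $I_\lambda$, so no $\T_P$-neighbourhood of $c_\alpha$ is contained in $I_\lambda$, whence $I_\lambda \notin \T_{P \cup I_\lambda^1}$.

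Finally, to show $\T_{P \cup I_2^2}$ is discrete: any $f$ of rank at least $2$ already has $\{f\} \in \T_{I_2^1} \subseteq \T_{P \cup I_2^2}$, and for a constant function $c_\alpha$ the singleton $\{c_\alpha\}$ equals $[(\beta) : (\alpha)] \cap I_2$, since $c_\alpha$ is the unique constant sending $\beta$ to $\alpha$; this is the intersection of a $\T_P$-open set with $I_2 = \emptyset \cup I_2 \in \T_{I_2^2}$, hence open in the join. The main obstacle I anticipate is bookkeeping the neighbourhood filters in $\T_{P \cup I_\lambda^i}$, which is only implicitly described by its generators; the simplification that keeps everything tractable is that a point of $I_\lambda$ admits only very large $\T_{I_\lambda^i}$-neighbourhoods, so the analysis there collapses onto $\T_P$-basic open sets intersected with $\transO$ or with $I_\lambda$.
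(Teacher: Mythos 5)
Your proof is correct and follows essentially the same route as the paper: Lemma~\ref{semi-top-lemma} for the topological semigroup and Hausdorff claims, the same set-theoretic identities for the (non-strict) inclusions, isolated points as witnesses for strictness, and the identical argument (singletons of non-constant maps together with $[(\beta):(\alpha)]\cap I_2$) for discreteness of $\T_{P\cup I_2^2}$. The only cosmetic difference is that the paper first computes explicit normal forms for the joined topologies and reads the strictness off from those, whereas you argue directly with neighbourhood bases at points of $I_\lambda$; both come to the same thing.
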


\begin{proof}
By Lemma~\ref{semi-top-lemma}(1), $\transO$ is a topological semigroup with respect to each $\T_{I_\lambda^i}$, and hence, by Lemma~\ref{semi-top-lemma}(2), the same holds for each $\T_{P \cup I_\lambda^i}$. Moreover, the topologies $\T_{P \cup I_\lambda^i}$ are Hausdorff, since they contain the pointwise topology $\T_P$.

It is easy to see that for any $1 < \lambda \leq |\Omega|$ the topology $\T_{P \cup I_\lambda^1}$ consists of all sets of the form $A\cup B$, where $A \in \T_P$ and $B \subseteq \transO\setminus I_\lambda$, while the topology $\T_{P \cup I_\lambda^2}$ consists of all sets of the form $(A\cap I_\lambda) \cup B$, where $A \in \T_P$ and $B \subseteq \transO\setminus I_\lambda$. Thus, $\T_{P \cup I_\lambda^1} \subset \T_{P \cup I_\lambda^2}$ for each $1 < \lambda \leq |\Omega|$. Moreover, given two cardinals $1 < \lambda < \kappa \leq |\Omega|$ and $i \in \{1,2\}$, $I_\lambda \subseteq I_\kappa$ and $$(A\cap I_\kappa) \cup B = (A\cap I_\lambda) \cup (B \cup (A \cap (I_\kappa \setminus I_\lambda)))$$ imply that $\T_{P \cup I_\kappa^i} \subseteq  \T_{P \cup I_\lambda^i}$. Noting that any $f \in I_{\kappa} \setminus I_{\lambda}$ is isolated in $\T_{P \cup I_\lambda^i}$, to conclude that $\T_{P \cup I_\kappa^i} \subset  \T_{P \cup I_\lambda^i}$ it suffices to show that $f \in I_{\kappa} \setminus I_{\lambda}$ is not isolated in $\T_{P \cup I_\kappa^i}$. But this follows immediately from the fact that $A \cap I_\kappa$ is either empty or infinite, for any $A \in \T_P$ and $\kappa \geq 3$, and hence any open set in $\T_{P \cup I_\kappa^i}$ containing $f$ is infinite.

Finally, $\T_{P \cup I_2^2}$ is the discrete topology, since $I_2 \cap [(\alpha) : (\alpha)] \in \T_{P \cup I_2^2}$ is the singleton set consisting of the constant function with value $\alpha$, for any $\alpha \in \Omega$, and all non-constant elements of $\transO$ are isolated in $\T_{P \cup I_2^2}$, by definition.
\end{proof}

In addition to describing the ideals of $\transO$, Mal'cev~\cite{Malcev} classified all the congruences $\rho$ on this semigroup. Thus, one can obtain additional Hausdorff semigroup topologies on $\transO$ by putting the discrete topology on $\transO/\rho$, and considering the topology generated by the pointwise topology together with the one induced on $\transO$ via Lemma~\ref{hom-lemma} (with $f : \transO \to \transO/\rho$ taken to be the natural projection). However, the topologies obtained this way typically, though not always, coincide with ones described in the previous proposition. Since non-Rees congruences on $\transO$ tend to be complicated to describe, we shall not discuss the resulting topologies in further detail here.

All the topologies on $\transO$ constructed in Proposition~\ref{top-chain-prop} have $2^{|\Omega|}$ isolated points, since for example, all the surjective elements of $\transO$ are isolated in these topologies. Our next goal is to show that any semigroup topology on $\transO$ with isolated points also must have ``many" of them. We begin with a couple of lemmas.

\begin{lem}\label{lem-dual-kernels}
Let $\, \Omega$ be a set, and suppose that $\, \transO$ is a semitopological semigroup with respect to some topology. Also suppose that $f, g\in \transO$ have the property that there exist $b_0, \ldots, b_n \in \transO$ and injective $a_0, \ldots, a_n\in \transO$, such that $$\, (\Omega) b_0 \cup \cdots \cup (\Omega) b_n = \Omega$$ and $$b_0ga_0 = \cdots = b_nga_n = f.$$ If $f$ is isolated, then so is $g$.
\end{lem}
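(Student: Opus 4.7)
The plan is to express $\{g\}$ as a finite intersection of open neighbourhoods obtained by pulling $\{f\}$ back through the continuous maps $h \mapsto b_i h a_i$.

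First, I would observe that since $\transO$ is semitopological, for each $i \in \{0, \ldots, n\}$ the map $\lt_{b_i}\circ \rt_{a_i} : \transO \to \transO$ sending $h$ to $b_i h a_i$ is continuous (as a composition of two continuous maps). Because $f$ is isolated, $\{f\}$ is open, and so each set
\[
U_i \;=\; (\{f\})(\lt_{b_i}\circ \rt_{a_i})^{-1} \;=\; \set{h \in \transO}{b_i h a_i = f}
\]
is open. The hypothesis $b_i g a_i = f$ says $g \in U_i$ for every $i$, so $g \in \bigcap_{i=0}^n U_i$, which is open as a finite intersection.

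The main step is then to verify that this intersection collapses to $\{g\}$. Let $h \in \bigcap_{i=0}^n U_i$, and fix $\omega \in \Omega$. By the covering hypothesis $(\Omega)b_0 \cup \cdots \cup (\Omega)b_n = \Omega$, there is some $i$ and some $\omega' \in \Omega$ with $(\omega')b_i = \omega$. Applying the equality $b_i h a_i = f = b_i g a_i$ pointwise at $\omega'$ gives
\[
((\omega)h)a_i \;=\; (\omega')b_i h a_i \;=\; (\omega')b_i g a_i \;=\; ((\omega)g)a_i.
\]
Since $a_i$ is injective, this forces $(\omega)h = (\omega)g$. As $\omega$ was arbitrary, $h = g$, so $\bigcap_{i=0}^n U_i = \{g\}$, and $\{g\}$ is open, i.e.\ $g$ is isolated.

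The only mildly subtle point is making sure to use both hypotheses on the $b_i$'s and the $a_i$'s in the right place: the image-covering condition on the $b_i$'s is what lets us reach every coordinate $\omega$ of the domain, while the injectivity of the $a_i$'s is what lets us cancel on the right in the codomain to pin down the value $(\omega)h$. Neither hypothesis alone would suffice, but together they force the intersection of the $U_i$ to be a singleton, which is the heart of the argument.
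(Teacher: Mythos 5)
Your proof is correct and follows essentially the same route as the paper's: both show that $\bigcap_{i=0}^{n}\set{h\in\transO}{b_iha_i=f}$ is an open set equal to $\{g\}$, using injectivity of the $a_i$ to cancel on the right and the covering condition on the images of the $b_i$ to pin down $h$ at every point. The only cosmetic difference is that you carry out the cancellation pointwise, whereas the paper first deduces $b_ih=b_ig$ and then observes that $h$ and $g$ agree on $(\Omega)b_i$.
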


\begin{proof}
Suppose that $i\in \{0, \ldots, n\}$ and that $h\in \transO$ is such that $b_iha_i = f$. Then $b_iha_i = b_iga_i$, and since $a_i$ is injective, $b_ih = b_ig$. In other words, $h$ and $g$ agree on $(\Omega)b_i$. Hence if $b_iha_i = f$ for all $i \in \{0, \ldots, n\}$, then $h = g$, since  $(\Omega) b_0 \cup \cdots \cup (\Omega) b_n = \Omega$. Thus $$\bigcap_{i=0}^{n}(f) (\lt_{b_i} \circ \rt_{a_i})^{-1} = \{g\},$$ and therefore if $f$ is isolated, then so is $g$. 
\end{proof}

A \emph{kernel class} of an element $f\in \transO$ is a nonempty set of the
form  $$(\beta)f^{-1}=\set{\alpha\in \Omega}{(\alpha)f=\beta}.$$ The \emph{kernel} $\ker(f)$ of $f \in \transO$ is the collection of the kernel classes of $f$. Finally, the \emph{kernel class type} of $f$ is the collection $\set{f_\kappa}{1\leq \kappa \leq |\Omega|}$, where $f_\kappa$ is the cardinality of the set of kernel classes of $f$ of size $\kappa$.

\begin{lem}\label{lem-kernels}
Let $\, \Omega$ be a nonempty set, and suppose that $\, \transO$ is a semitopological semigroup with respect to some topology. Also let $f,g \in \transO$ be such that there is a bijection $p$ from the set of kernel classes of $g$ to the set of kernel classes of $f$, with the property that $\, |(\Sigma)p| \geq |\Sigma|$ for each kernel class $\, \Sigma$ of $g$. If $f$ is isolated, then so is $g$.

In particular, if an element of $\, \transO$ is isolated, then so is every other element of $\, \transO$ with the same kernel class type.
\end{lem}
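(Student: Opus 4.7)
The plan is to invoke Lemma~\ref{lem-dual-kernels} with $n=0$: I will construct a single pair $(b,a)\in\transO\times\transO$, with $b$ surjective and $a$ injective, such that $bga=f$. Then Lemma~\ref{lem-dual-kernels} immediately gives that $g$ is isolated whenever $f$ is.

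For the setup, I would index the kernel classes of $g$ as $\{\Sigma_\iota\}_{\iota\in I}$ with $(\Sigma_\iota)g=\{\beta_\iota\}$, and set $\Sigma'_\iota=(\Sigma_\iota)p$ with $(\Sigma'_\iota)f=\{\gamma_\iota\}$, so that $\im(g)=\{\beta_\iota:\iota\in I\}$ and $\im(f)=\{\gamma_\iota:\iota\in I\}$. Since $|\Sigma'_\iota|\geq|\Sigma_\iota|$, I can pick a surjection $\Sigma'_\iota\to\Sigma_\iota$ for each $\iota$ and glue them into $b\in\transO$; then $(\Omega)b=\bigcup_\iota\Sigma_\iota=\Omega$. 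For $a$, the equation $bga=f$ forces $(\beta_\iota)a=\gamma_\iota$ for each $\iota$, which defines $a$ as a bijection from $\im(g)$ onto $\im(f)$; I would then extend $a$ to an injection $\Omega\to\Omega$ by choosing any injection $\Omega\setminus\im(g)\to\Omega\setminus\im(f)$. The verification $bga=f$ is a direct computation: for $x\in\Sigma'_\iota$, $(x)bg=\beta_\iota$ and $(\beta_\iota)a=\gamma_\iota=(x)f$.

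For the ``in particular'' statement, sameness of kernel class type allows me to pair up the kernel classes of $g$ and $f$ of each common size, yielding a bijection $p$ with $|(\Sigma)p|=|\Sigma|$ for every kernel class $\Sigma$ of $g$; the first part of the lemma applied twice (once with $p$ and once with $p^{-1}$) then gives $f$ isolated iff $g$ isolated.

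The step I expect to be most delicate is the injective extension of $a$: while $a|_{\im(g)}$ is already a bijection onto $\im(f)$, extending to an injection on all of $\Omega$ depends on the cardinality comparison $|\Omega\setminus\im(g)|\leq|\Omega\setminus\im(f)|$. This is immediate when both complements have cardinality $|\Omega|$ (e.g.\ when $|\im(g)|<|\Omega|$), but in the edge case where $f$ is surjective while $g$ is not the single-pair construction breaks down, and I would fall back on Lemma~\ref{lem-dual-kernels} with $n\geq 1$, distributing the constraints across several pairs $(b_i,a_i)$ whose $b_i$-images cover $\Omega$ only collectively so that each $a_i$ carries a lighter injectivity burden.
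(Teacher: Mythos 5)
Your main construction (one surjective $b$ with $(\Sigma_\iota')b=\Sigma_\iota$ and one injective $a$ extending $\beta_\iota\mapsto\gamma_\iota$, fed into Lemma~\ref{lem-dual-kernels} with $n=0$) is fine whenever the injective extension of $a$ exists, and you correctly isolate the obstruction: it requires $|\Omega\setminus(\Omega)g|\leq|\Omega\setminus(\Omega)f|$. The gap is that your proposed fallback for the remaining case cannot work. If $a$ is injective and $bga=f$, then $a$ maps $(\Omega)bg\subseteq(\Omega)g$ bijectively onto $(\Omega)f$ and therefore maps $\Omega\setminus(\Omega)bg\supseteq\Omega\setminus(\Omega)g$ injectively into $\Omega\setminus(\Omega)f$; so the inequality $|\Omega\setminus(\Omega)g|\leq|\Omega\setminus(\Omega)f|$ is forced by the mere existence of \emph{any} single pair $(b,a)$ with $a$ injective and $bga=f$. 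Consequently, when it fails there are no admissible pairs $(b_i,a_i)$ at all, and Lemma~\ref{lem-dual-kernels} is inapplicable for every $n$ --- distributing the work over several pairs does not lighten the injectivity burden, since each $a_i$ must individually be injective on all of $\Omega$. This is not a vacuous edge case: take $\Omega=\N$, $f=\id$, and $g:n\mapsto n+1$; both have kernel class type ``all classes singletons,'' so even the ``in particular'' clause requires handling it.

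The paper's proof gets around exactly this by abandoning injectivity of the right-hand factors. It uses one surjective (hence left-cancellable) $b$ together with \emph{two} maps $a_0,a_1$ that agree on $(\Omega)g$ (sending $\gamma_\iota\mapsto\beta_\iota$) but are constant with distinct values $\delta_0\neq\delta_1$ on $\Omega\setminus(\Omega)g$. Then $bga_0=bga_1=f$, and if $bha_0=bha_1=f$, the point $(\alpha)bh$ is pinned down to $(\alpha)bg$ using whichever of $a_0,a_1$ has its collapse value different from $(\alpha)f$; since $\delta_0\neq\delta_1$ this covers every $\alpha$, so $bh=bg$ and surjectivity of $b$ gives $h=g$, i.e.\ $(f)(\lt_b\circ\rt_{a_0})^{-1}\cap(f)(\lt_b\circ\rt_{a_1})^{-1}=\{g\}$. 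You would need to replace your fallback with an argument of this kind (a direct preimage computation rather than an appeal to Lemma~\ref{lem-dual-kernels}) to close the gap.
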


\begin{proof}
We may assume that $|\Omega| \geq 2$, since otherwise $f=g$.

Since the kernel classes of $f$ and $g$ are in one-to-one correspondence, $|(\Omega)f| = |(\Omega)g|$, and hence we can write $(\Omega) f=\set{\beta_{\iota}}{\iota \in \Gamma}$ and $(\Omega) g= \set{\gamma_{\iota}}{\iota \in \Gamma}$, for some index set $\Gamma$. Moreover, by the hypothesis on $p$, we may choose the $\beta_{\iota}$ and $\gamma_{\iota}$ such that $|(\beta_{\iota})f^{-1}| \geq |(\gamma_{\iota})g^{-1}|$ for all $\iota \in \Gamma$. Let $b\in \transO$ be any surjection such that $\left((\beta_{\iota})f^{-1}\right)b = (\gamma_{\iota})g^{-1}$ for all $\iota \in \Gamma$. Also, let $a_0, a_1 \in \transO$ be any elements that take $\gamma_{\iota}$ to $\beta_{\iota}$ for all $\iota \in \Gamma$, and that have constant value $\delta_0, \delta_1 \in \Omega$, respectively, on $\Omega \setminus (\Omega) g$, where $\delta_0 \neq \delta_1$. Then $bga_0=bga_1=f$. We shall show that $g$ is the only element of $\transO$ with this property.

Suppose that $bha_0=f$ for some $h\in \transO$.  Then $(\alpha)bh=(\alpha)bg$ for any $\alpha\in \Omega$ satisfying $(\alpha)f\not = \delta_0$. Similarly, if $bha_1=f$ and $(\alpha)f \not = \delta_1$, then $(\alpha)bh=(\alpha)bg$. Thus if both $bha_0=f$  and  $bha_1=f$, then $bh=bg$, and hence $h=g$, since $b$ is surjective and therefore left-invertible. Thus, we have shown that $$(f) (\lt_{b} \circ \rt_{a_0})^{-1} \cap (f) (\lt_{b} \circ \rt_{a_1})^{-1} = \{g\},$$ and therefore if $f$ is isolated, then so is $g$.

The final claim is immediate.
\end{proof}

The \emph{cofinality} $\cf(\kappa)$ of a cardinal $\kappa$ is the least cardinal $\lambda$ such that $\kappa$ is the union of $\lambda$ cardinals, each smaller than $\kappa$. A cardinal $\kappa$ is \emph{regular} if $\cf(\kappa) = \kappa$. It is a standard and easily verified fact that $\cf(\kappa) \leq \kappa$ for every cardinal $\kappa$. Let us also recall that for an infinite cardinal $\kappa$, the cofinality $\cf(\kappa)$ is necessarily infinite. (Otherwise it would be the case that $\kappa = \lambda_0 + \dots + \lambda_n$ for some $n \in \N$ and infinite cardinals $\lambda_i < \kappa$. But $\sum_{i=0}^n \lambda_i = \max\{\lambda_0, \dots, \lambda_n\},$ since the $\lambda_i$ are infinite, contradicting $\lambda_i < \kappa$.)

We are now ready for the main result of this section.

\begin{theorem} \label{many-isolated}
Let $\, \Omega$ be an infinite set, let $\kappa = \cf(|\Omega|)$, and suppose that $\, \transO$ is a semitopological semigroup with respect to some topology. Then there are either no isolated points or at least $\, 2^{\kappa}$ isolated points in $\, \transO$. 

In particular, if $\, |\Omega|$ is regular, then there are either no isolated points or $\, 2^{|\Omega|}$ isolated points in $\, \transO$.
\end{theorem}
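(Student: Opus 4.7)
Suppose $f \in \transO$ is isolated. My plan is to produce $2^\kappa$ distinct isolated elements, splitting into two cases based on whether $f$ has a kernel class of cardinality $|\Omega|$. By the definition of $\kappa = \cf(|\Omega|)$, I may write $|\Omega| = \sum_{i<\kappa}\lambda_i$ with each $\lambda_i$ a positive cardinal $<|\Omega|$.

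\textbf{Case 1:} some kernel class $K$ of $f$ has $|K|=|\Omega|$. Partition $K=\bigsqcup_{i<\kappa}K_i$ with $|K_i|=\lambda_i$, let $\delta$ be the value of $f$ on $K$, and pick $\delta' \in \Omega\setminus\{\delta\}$. For each $A\subseteq\kappa$, define $g_A\in\transO$ by $g_A(\omega) = \delta'$ if $\omega\in\bigcup_{i\in A}K_i$ and $g_A(\omega) = f(\omega)$ otherwise. Distinct $A\subseteq\kappa$ give distinct $g_A$, since the $K_i$ are nonempty and disjoint, yielding $2^\kappa$ functions. To show each $g_A$ is isolated, I apply Lemma~\ref{lem-dual-kernels} with two pairs $(b_0,a_0),(b_1,a_1)$: take $a_0=\id_\Omega$, and let $a_1 \in \transO$ be an injection swapping $\delta$ and $\delta'$ and fixing all remaining elements of $(\Omega)f$. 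Define $b_0$ to act as the identity on $\Omega\setminus K$ and to surject $K$ onto $\bigcup_{i\notin A}K_i$; define $b_1$ to surject $K$ onto $f^{-1}(\delta')\cup\bigcup_{i\in A}K_i$ (with $f^{-1}(\delta'):=\emptyset$ if $\delta'\notin(\Omega)f$), to send $f^{-1}(\delta')$ into $\bigcup_{i\notin A}K_i$, and to act as the identity elsewhere. A direct check then shows $b_jg_Aa_j = f$ for $j\in\{0,1\}$ and $(\Omega)b_0\cup(\Omega)b_1=\Omega$; and because the kernels of $b_0$ and $b_1$ refine that of $f$, the unique $g$ satisfying both $b_0ga_0=f$ and $b_1ga_1=f$ is $g_A$. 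By Lemma~\ref{lem-dual-kernels}, $g_A$ is isolated.

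\textbf{Case 2:} every kernel class of $f$ has size $<|\Omega|$. Since the class sizes sum to $|\Omega|$ and each is $<|\Omega|$, the number $r$ of kernel classes is at least $\kappa$. Fixing the kernel partition, a function with the same kernel partition as $f$ is determined by an injective labelling of the $r$ classes by elements of $\Omega$; varying the labels on $\kappa$ of the classes independently over $\Omega$ gives at least $|\Omega|^\kappa \geq 2^\kappa$ distinct injective labellings (each extending to the remaining classes), hence at least $2^\kappa$ distinct elements of $\transO$ with the same kernel class type as $f$. By the final claim of Lemma~\ref{lem-kernels}, all of these are isolated.

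The main obstacle lies in Case 1: the construction must be verified both when $\delta'\in(\Omega)f$ (so $g_A$'s $\delta'$-class is enlarged to $f^{-1}(\delta')\cup\bigcup_{i\in A}K_i$ and $b_1$'s description must incorporate $f^{-1}(\delta')$) and when $\delta'\notin(\Omega)f$ (where $g_A$ gains a new image value); in either case, the kernels of $b_0,b_1$ must refine that of $f$ so that the system $b_jga_j=f$ determines $g$ uniquely. The ``in particular'' clause is immediate: when $|\Omega|$ is regular, $\kappa=|\Omega|$, so $2^\kappa=2^{|\Omega|}$.
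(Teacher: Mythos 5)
Your proof is correct and rests on the same two key lemmas as the paper's (Lemmas~\ref{lem-dual-kernels} and~\ref{lem-kernels}); the only genuine difference is where the dichotomy is drawn. You split on whether $f$ has a kernel class of size $|\Omega|$, while the paper splits on whether $f$ has at least $\kappa$ kernel classes. The paper's split guarantees, in the large-kernel-class case, that $|(\Omega)f| < \kappa \leq |\Omega|$, so the new value can be chosen outside the image of $f$, the verification of $b_0ga_0 = b_1ga_1 = f$ is short, and one even gets $2^{|\Omega|}$ isolated points by varying a two-block partition of the large class. Your split buys nothing extra but costs you the subcase $\delta' \in (\Omega)f$ (which can occur, since $f$ may be surjective while still having a kernel class of full size); this is exactly why your $b_1$ must also shuffle $f^{-1}(\delta')$, and I checked that your pairs $(b_0,a_0)$, $(b_1,a_1)$ do satisfy the hypotheses of Lemma~\ref{lem-dual-kernels} in both subcases. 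Two small repairs: restrict to $A \subsetneq \kappa$ so that $\bigcup_{i \notin A} K_i$ is nonempty and $b_0$ exists (this still leaves $2^{\kappa}$ choices of $A$); and in Case~2 it is cleaner to count, as the paper does, the $2^{r} \geq 2^{\kappa}$ compositions of $f$ with permutations of its $r \geq \kappa$ image points, rather than extending partial injective labellings, since when $r = \kappa = |\Omega|$ a labelling of a proper $\kappa$-subset of the classes that is surjective onto $\Omega$ admits no injective extension.
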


\begin{proof}
Suppose that $f\in \transO$ is isolated. If $f$ has at least $\kappa$ distinct kernel classes, then the set of those $g \in \transO$ with the same kernel class type as $f$ has cardinality at least $2^{\kappa}$. (Since $\kappa$ is infinite, one can obtain $2^{\kappa}$ such $g$ by permuting $\kappa$ of the elements in the image of $f$.) Hence, there are at least $2^{\kappa}$ isolated points in $\transO$, by Lemma~\ref{lem-kernels}. Let us therefore assume that $f$ has strictly fewer than $\kappa$ distinct kernel classes, and so, in particular, $\Omega \setminus (\Omega)f \neq \emptyset$, since $\kappa \leq |\Omega|$. 

It cannot be the case that every kernel class of $f$ has cardinality strictly less than $|\Omega|$, since then the union of these (fewer than $\kappa$) kernel classes would have cardinality strictly less than $|\Omega|$, by the definition of ``cofinality". This would contradict the fact that the union of the kernel classes of any element of $\transO$ is $\Omega$. Therefore, there must be a kernel class $\Sigma$ of $f$ such that $|\Sigma| = |\Omega|$. Let $(\Sigma)f  = \beta$, and partition $\Sigma$ as $\Sigma = \Lambda_0 \cup \Lambda_1$, such that $\Lambda_0$ and $\Lambda_1$ are nonempty. Let $g\in \transO$ be defined by 
$$(\alpha)g = \left\{ \begin{array}{cl}
(\alpha)f & \text{if } \, \alpha \in \Omega\setminus \Sigma\\
\beta & \text{if } \, \alpha \in \Lambda_0\\
\gamma & \text{if } \, \alpha \in \Lambda_1
\end{array}\right.,$$
where $\gamma$ is any value in $\Omega \setminus (\Omega) f$ (which exists by assumption). Let $b_0, b_1\in \transO$ be any functions that act as the identity on $\Omega\setminus \Sigma$, and where $(\Sigma)b_0 = \Lambda_0$ and $(\Sigma)b_1 = \Lambda_1$. Also let $a_0 \in \transO$ be the identity function, and let $a_1 \in \transO$ be the transposition interchanging $\beta$ and $\gamma$. Then $$(\Omega) b_0 \cup(\Omega) b_1 = \Lambda_0 \cup \Lambda_1 \cup (\Omega\setminus \Sigma) = \Omega,$$ and $f = b_0ga_0 = b_1ga_1$. Hence $g$ is isolated, by Lemma~\ref{lem-dual-kernels}. Since there are $2^{|\Sigma|} = 2^{|\Omega|}$ ways to partition $\Sigma$ into $\Lambda_0$ and $\Lambda_1$, resulting in $2^{|\Omega|} \geq 2^{\kappa}$ functions $g$, the desired conclusion follows.

For the final claim, we recall the fact that $2^{\lambda} = \mu^{\lambda}$ for any infinite cardinal $\lambda$ and any cardinal $\mu$ satisfying $2 \leq \mu \leq \lambda$ (see, e.g.,~\cite[Lemma 5.6]{Jech}). Thus $2^{|\Omega|} = |\Omega|^{|\Omega|} = |\transO|$, since $\Omega$ is assumed to be infinite. Therefore, if $|\Omega|$ is regular and there are isolated points in $\transO$, then there are precisely $2^{\cf(|\Omega|)} = 2^{|\Omega|}$ of them.
\end{proof}

Recall that a topology is \emph{perfect} if it has no isolated points.

\begin{cor} \label{separable-perfect}
Let $\, \Omega$ be an infinite set. Then every separable topology on $\, \transO$, with respect to which $\, \transO$ is a semitopological semigroup, is perfect.
\end{cor}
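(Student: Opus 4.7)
The plan is to argue by contradiction, combining Theorem~\ref{many-isolated} with the elementary observation that isolated points must belong to every dense set. Suppose $\T$ is a separable topology on $\transO$ that is not perfect, so there is at least one isolated point. Theorem~\ref{many-isolated} then guarantees at least $2^{\cf(|\Omega|)}$ isolated points in $\transO$.

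Next I would invoke the earlier remark that for infinite $|\Omega|$ the cofinality $\cf(|\Omega|)$ is necessarily infinite, so $\cf(|\Omega|) \geq \aleph_0$ and consequently
\[
2^{\cf(|\Omega|)} \geq 2^{\aleph_0} > \aleph_0.
\]
So $\transO$ contains uncountably many isolated points in $\T$.

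The final step is to note that if $x \in \transO$ is isolated then $\{x\}$ is open, so every dense subset of $\transO$ must contain $x$. Hence any dense subset of $\transO$ has cardinality at least that of the set of isolated points, which is uncountable. This contradicts separability, completing the proof.

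I do not expect any real obstacle; the corollary is a one-line consequence of Theorem~\ref{many-isolated} once the observation about isolated points lying in every dense set is made. The only thing to double-check is the cofinality estimate, which was already recorded in the paragraph preceding Theorem~\ref{many-isolated}.
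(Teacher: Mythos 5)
Your argument is correct and is essentially identical to the paper's proof: both invoke Theorem~\ref{many-isolated} to get at least $2^{\cf(|\Omega|)} \geq 2^{\aleph_0}$ isolated points and then contradict separability via the fact that every dense set must contain every isolated point. (Your cofinality estimate is the right one; the paper's parenthetical "$\cf(|\Omega|) \geq |\Omega|$" is a slip, but only $\cf(|\Omega|) \geq \aleph_0$ is needed.)
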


\begin{proof}
Suppose that $\transO$ is a semitopological semigroup with respect to a separable topology $\T$. By Theorem~\ref{many-isolated}, if $\T$ has isolated points, then it must have at least $2^{\cf(|\Omega|)} \geq 2^{\aleph_0}$ of them, since as noted above, $\cf(|\Omega|) \geq |\Omega| \geq \aleph_0$. But since $\T$ is separable, it can have at most countably many isolated points, and therefore $\T$ must be perfect. 
\end{proof}

We conclude this section with another result about isolated points. It sheds additional light on the topologies constructed in Proposition~\ref{top-chain-prop} and strengthens Lemma~\ref{lem-kernels}.

\begin{prop}
Let $\, \Omega$ be an infinite set, and suppose that $\, \transO$ is a semitopological semigroup with respect to some topology. Let $f,g \in \transO$ be such that $f$ is isolated. If either of the following conditions holds, then $g$ is also isolated.
\begin{enumerate}
\item[$(1)$] $|(\Omega)f| \leq |(\Omega)g| < \aleph_0$.
\item[$(2)$] $|(\Omega)f| = |(\Omega)g| \geq \aleph_0$, and there is an injection $p$ from the set $K_g$ of kernel classes of $g$ to the set $K_f$ of kernel classes of $f$, with the property that $\, |(\Sigma)p| \geq |\Sigma|$ for each $\Sigma \in K_g$.
\end{enumerate}
\end{prop}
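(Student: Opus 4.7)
To handle both cases, my plan is to apply Lemma~\ref{lem-dual-kernels}: in each case I will produce finitely many pairs $(b_0,a_0),\ldots,(b_N,a_N) \in \transO \times \transO$ with each $a_k$ injective, $b_k g a_k = f$, and $(\Omega)b_0 \cup \cdots \cup (\Omega)b_N = \Omega$. Write the kernel classes of $f$ as $\{C_\iota : \iota \in I_f\}$ with $f$ taking the constant value $\beta_\iota$ on $C_\iota$, and those of $g$ as $\{D_j : j \in I_g\}$ with $g$ taking the value $\gamma_j$ on $D_j$. The key observation is that for any injection $\rho:I_f \to I_g$, I can build an injective $a \in \transO$ with $(\gamma_{\rho(\iota)})a = \beta_\iota$ (extending to an injection on all of $\Omega$, which is possible since $|\Omega|$ is infinite) and a $b \in \transO$ with $(C_\iota)b \subseteq D_{\rho(\iota)}$; a direct check confirms $bga = f$, and if $b$ maps $C_\iota$ surjectively onto $D_{\rho(\iota)}$ (which requires $|C_\iota| \geq |D_{\rho(\iota)}|$), then $D_{\rho(\iota)} \subseteq (\Omega)b$.

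For case (1), since $|\Omega|$ is infinite while $|I_f|$ is finite, a finite sum of cardinals equal to $|\Omega|$ must have some summand equal to $|\Omega|$, so some $\iota^* \in I_f$ satisfies $|C_{\iota^*}| = |\Omega|$. For each $i \in I_g$, I pick an injection $\rho_i:I_f \to I_g$ with $\rho_i(\iota^*) = i$ (possible since $|I_f| \leq |I_g|$) and arrange $b_i$ to map $C_{\iota^*}$ surjectively onto $D_i$. The $|I_g|$ many $b_i$'s then satisfy $(\Omega)b_i \supseteq D_i$, so their images jointly cover $\bigcup_{i \in I_g} D_i = \Omega$ and Lemma~\ref{lem-dual-kernels} concludes.

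For case (2), let $\kappa = |I_f| = |I_g|$ and partition $I_g = J_1 \sqcup J_2$ with $|J_1| = |J_2| = \kappa$ (possible since $\kappa$ is infinite). For $k \in \{1,2\}$, define $\rho_k:I_f \to I_g$ by setting $\rho_k(p(j)) = j$ for every $j \in J_k$ and extending to an injection from $I_f \setminus p(J_k)$ into $I_g \setminus J_k$; this extension exists because $I_f \setminus p(J_k) = p(J_{3-k}) \sqcup (I_f \setminus p(I_g))$ has cardinality at most $\kappa$, while $|I_g \setminus J_k| = |J_{3-k}| = \kappa$. Using the hypothesis $|C_{p(j)}| \geq |D_j|$, I can take $b_k$ to map $C_{p(j)}$ surjectively onto $D_j$ for each $j \in J_k$, giving $(\Omega)b_k \supseteq \bigcup_{j \in J_k}D_j$, and hence $(\Omega)b_1 \cup (\Omega)b_2 \supseteq \Omega$; Lemma~\ref{lem-dual-kernels} again finishes the argument.

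The main subtlety I anticipate is the cardinality bookkeeping in case (2): extending $\rho_k$ from its partial definition on $p(J_k)$ to an injection on the whole of $I_f$ requires $|I_f \setminus p(J_k)| \leq |I_g \setminus J_k|$, and partitioning $I_g$ into two pieces of cardinality exactly $\kappa$ is precisely what guarantees this inequality uniformly in $k$, regardless of how far $p$ falls short of being surjective.
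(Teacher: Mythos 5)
Your case (1) is essentially the paper's argument (pick a kernel class of $f$ of full cardinality and, for each kernel class of $g$, a pair $(b,a)$ sending that big class onto it) and is correct. Case (2), however, has a genuine gap in the construction of the injective maps $a_k$. You define $a_k$ by the partial rule $(\gamma_{\rho_k(\iota)})a_k = \beta_\iota$ and assert it extends to an injection on all of $\Omega$ ``since $|\Omega|$ is infinite''. But this partial map has domain $\set{\gamma_{\rho_k(\iota)}}{\iota \in I_f}$ and image exactly $(\Omega)f$, so extending it to a globally injective element of $\transO$ requires mapping $\Omega \setminus \set{\gamma_{\rho_k(\iota)}}{\iota \in I_f}$ injectively into $\Omega \setminus (\Omega)f$, which is impossible whenever $|\Omega \setminus (\Omega)f| < |\Omega \setminus \set{\gamma_{\rho_k(\iota)}}{\iota \in I_f}|$. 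A concrete instance satisfying hypothesis (2): $\Omega = \N$, $f = \id$ (surjective, so $\Omega \setminus (\Omega)f = \emptyset$) and $g : n \mapsto 2n$ (injective but not surjective). Here every $a_k$ would have to be a bijection from a proper subset of $\N$ onto $\N$ extended to an injection of $\N$, which does not exist; note that the composite $b_k g a_k = f$ forces $(\gamma_{\rho_k(\iota)})a_k = \beta_\iota$ for every $\iota$, so there is no freedom to evade this.

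The paper avoids the problem by inserting a normalisation step before your argument: using Lemma~\ref{lem-kernels} (with the identity bijection on kernel classes), one may replace $f$ by an isolated function with the same kernel but with $|\Omega \setminus (\Omega)f| = |\Omega|$; after that, the leftover part of $\Omega$ can always be injected into $\Omega \setminus (\Omega)f$ and your $a_k$ exist. With that one additional step your case (2) goes through and is essentially the paper's proof (the paper likewise splits $K_g$ into two pieces of full cardinality and uses two pairs $(b_0,a_0)$, $(b_1,a_1)$). As written, though, case (2) is incomplete.
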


\begin{proof}
Suppose that (1) holds. Let $\Xi_0, \dots, \Xi_{n}$ be the kernel classes of $f$, where we may assume that $|\Xi_0| = |\Omega|$, and let $\Upsilon_0, \dots, \Upsilon_m$ be the kernel classes of $g$. Also, write $(\Xi_i)f = \alpha_i$ and $(\Upsilon_j)g = \beta_j$ for all $i \in \{0, \dots, n\}$ and $j \in \{0, \dots, m\}$. For each $j \in \{0, \dots, m\}$ let $a_j \in \transO$ be an injection such that $(\beta_j)a_j = \alpha_0$ and $$\{\alpha_0, \dots, \alpha_n\} \subseteq (\{\beta_0, \dots, \beta_m\})a_j.$$ Such functions exist, since $n \leq m$. Also for each $j \in \{0, \dots, m\}$ let $b_j \in \transO$ be such that $(\Xi_0)b_j = \Upsilon_j$, and $(\Xi_i)b_j \subseteq \Upsilon_l$, where $\beta_l = (\alpha_i)a_j^{-1}$, for each $i \in \{1, \dots, n\}$. Such functions exist, since $|\Xi_0| = |\Omega|$. Then $b_jga_j = f$ for each $j \in \{0, \dots, m\}$, and $$\Omega \supseteq \bigcup_{i=0}^m(\Omega)b_j \supseteq  \bigcup_{j=0}^m \Upsilon_j = \Omega,$$ which implies that $$(\Omega) b_0 \cup \cdots \cup (\Omega) b_m = \Omega.$$ Therefore $g$ is isolated, by Lemma~\ref{lem-dual-kernels}.

Now suppose that (2) holds. Upon replacing $f$ with another function having the same kernel, we may assume, by Lemma~\ref{lem-kernels}, that $|\Omega \setminus (\Omega)f| = |\Omega|$. Since the image of $g$ is infinite, we can write $$K_g = \set{\Xi_{\iota}}{\iota \in \Gamma} \cup \set{\Upsilon_{\zeta}}{\zeta \in \Delta},$$ where the union is disjoint, and $|\Gamma| = |\Delta| = |(\Omega)g|$. Let $b_0 \in \transO$ be such that $((\Xi_{\iota})p)b_0 = \Xi_{\iota}$ for each $\iota \in \Gamma$, and $b_0$ takes $K_f \setminus (\set{\Xi_{\iota}}{\iota \in \Gamma})p$ injectively into $\set{\Upsilon_{\zeta}}{\zeta \in \Delta}$. (That is, $b_0$ maps all the points in an element of $K_f \setminus (\set{\Xi_{\iota}}{\iota \in \Gamma})p$ to some $\Upsilon_{\zeta}$.) Such a transformation exists, since $|(\Xi_{\iota})p| \geq |\Xi_{\iota}|$ for each $\Xi_{\iota}$, and $|\Delta| = |(\Omega)f|$. Similarly, let $b_1 \in \transO$ be such that $((\Upsilon_{\zeta})p)b_1 = \Upsilon_{\zeta}$ for each $\zeta \in \Delta$, and $b_1$ takes $K_f \setminus (\set{\Upsilon_{\zeta}}{\zeta \in \Delta})p$ injectively into $\set{\Xi_{\iota}}{\iota \in \Gamma}$. Then $$\Omega \supseteq (\Omega) b_0 \cup (\Omega) b_1 \supseteq \bigcup_{\iota \in \Gamma} \Xi_{\iota} \cup \bigcup_{j\in \Delta} \Upsilon_{\zeta} = \Omega,$$ giving $(\Omega)b_0 \cup (\Omega)b_1 = \Omega$. Moreover, $$\ker(b_0g) = \ker(b_1g) = \ker(f),$$ and since $|\Omega \setminus (\Omega)f| = |\Omega|$, there exist injective $a_0, a_1\in \transO$ such that $b_0ga_0 = b_1ga_1 = f$. Hence, by Lemma~\ref{lem-dual-kernels}, $g$ is isolated.
\end{proof}

\section{Topologies obtained by restricting images}

Given a set $\Omega$ and collection $\set{\Sigma_{\alpha}}{\alpha \in \Omega}$ of subsets of $\Omega$, we identify $\prod_{\alpha \in \Omega} \Sigma_{\alpha}$ with $$\set{f \in \transO}{(\alpha)f \in \Sigma_{\alpha} \text{ for all } \alpha \in \Omega}.$$ Next we construct a perfect Hausdorff topology on $\transO$, different from the pointwise topology, by declaring certain sets of the form $\prod_{\alpha \in \Omega} \Sigma_{\alpha}$ open.

\begin{prop} \label{open-prod-top}
Let $\, \Omega$ be an infinite set, let $$\B = \bigg\{\prod_{\alpha \in \Omega} \Sigma_{\alpha} : |\set{\alpha \in \Omega}{\beta \notin \Sigma_{\alpha}}| <|\Omega| \text{ for all } \beta \in \Omega\bigg\},$$ and let $\, \T = \langle \B \rangle$ denote the topology on $\, \transO$ generated by $\B$. Then the following hold.
\begin{enumerate}
\item[$(1)$] $\B$ is closed under finite intersections, and in particular is a base for $\, \T$.
\item[$(2)$] Any base for $\, \T$ must have strictly more than $\, |\Omega|$ elements.
\item[$(3)$] $\T$ strictly contains the pointwise topology. In particular, $\, \T$ is Hausdorff.
\item[$(4)$] $\T$ is perfect.
\item[$(5)$] There is a subset of $\, \transO$ of cardinality $\, | \sum_{\kappa < |\Omega|} |\Omega|^{\kappa} |$ that is dense in $\, \T$. In particular, if $\, \Omega$ is countable, then $\, \T$ is separable.
\item[$(6)$] No nonempty element of $\B$ is contained in a compact subset of $\, \transO$.
\item[$(7)$] If $\, |\Omega|$ is regular, then $\, \transO$ is a topological semigroup with respect to $\, \T$. 
\end{enumerate}
\end{prop}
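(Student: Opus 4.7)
The plan is to verify joint continuity of multiplication $\mt : \transO \times \transO \to \transO$ directly at each point. Since $\B$ is a base for $\T$ by part~(1), it suffices, given any $U = \prod_{\alpha \in \Omega} \Sigma_\alpha \in \B$ and any $f, g \in \transO$ with $fg \in U$, to produce $V, W \in \B$ with $f \in V$, $g \in W$, and $VW \subseteq U$. Set $\kappa = |\Omega|$, fix a well-ordering of $\Omega$ of order type $\kappa$ (so that each $\alpha \in \Omega$ is identified with an ordinal less than $\kappa$), and for each $\gamma \in \Omega$ write $\Delta_\gamma = \{\alpha : \gamma \notin \Sigma_\alpha\}$, which has cardinality less than $\kappa$ by the definition of $\B$.

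For each $\alpha \in \Omega$, let
$$\Pi_\alpha = (\Sigma_\alpha)g^{-1} \cap \bigl(\{(\alpha)f\} \cup \{\beta \in \Omega : \beta < \alpha\}\bigr),$$
and set $V = \prod_\alpha \Pi_\alpha$. Then define $\Theta_\beta = \bigcap_{\alpha : \beta \in \Pi_\alpha} \Sigma_\alpha$ for each $\beta \in \Omega$, and $W = \prod_\beta \Theta_\beta$.

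The bulk of the proof is then five verifications. First, $f \in V$: the identity $((\alpha)f)g = (\alpha)fg \in \Sigma_\alpha$ gives $(\alpha)f \in (\Sigma_\alpha)g^{-1}$, and clearly $(\alpha)f \in \{(\alpha)f\} \cup \{\beta < \alpha\}$. Second, $g \in W$: any $\beta \in \Pi_\alpha \subseteq (\Sigma_\alpha)g^{-1}$ satisfies $(\beta)g \in \Sigma_\alpha$, so $(\beta)g$ is in every term of the intersection defining $\Theta_\beta$. Third, $V \in \B$: if $\gamma \notin \Pi_\alpha$, then either $\alpha \in \Delta_{(\gamma)g}$ (from $\gamma \notin (\Sigma_\alpha)g^{-1}$) or $\alpha \leq \gamma$ (from $\gamma \notin \{(\alpha)f\} \cup \{\beta < \alpha\}$), so $\{\alpha : \gamma \notin \Pi_\alpha\}$ lies in a union of two sets of size less than $\kappa$ and hence has size less than $\kappa$. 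Fourth, $W \in \B$: one computes $\{\beta : \gamma \notin \Theta_\beta\} = \bigcup_{\alpha \in \Delta_\gamma} \Pi_\alpha$, a union of fewer than $\kappa$ sets each of size at most $|\alpha| + 1 < \kappa$, hence of size less than $\kappa$ by the regularity of $\kappa$. Fifth, $VW \subseteq U$: for $f' \in V$, $g' \in W$, and $\alpha \in \Omega$, one has $(\alpha)f' \in \Pi_\alpha$, so $((\alpha)f')g' \in \Theta_{(\alpha)f'} \subseteq \Sigma_\alpha$ by the definition of $\Theta$, and therefore $f'g' \in U$.

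The main obstacle is handling the case when $\{\alpha : \Sigma_\alpha \neq \Omega\}$ has cardinality $\kappa$: the naive attempt $\Pi_\alpha = \{(\alpha)f\}$ typically leaves $\{\alpha : (\alpha)f \neq \gamma\}$ of size $\kappa$, so that $V$ falls outside $\B$. Throwing the initial segment $\{\beta < \alpha\}$ into $\Pi_\alpha$ resolves this tension: initial segments in a well-order of type $\kappa$ have cardinality less than $\kappa$, while a fixed $\gamma$ fails to lie in $\{\beta < \alpha\}$ only when $\alpha \leq \gamma$, which is a small set. The hypothesis that $\kappa$ is regular enters crucially in combining small sets into another small one, both when verifying $V \in \B$ and when verifying $W \in \B$, which explains why the assumption cannot be dropped in this argument.
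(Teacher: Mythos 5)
Your argument for part (7) is correct and is essentially the paper's own proof: your $\Pi_\alpha$ coincides exactly with the paper's $\Gamma_{\alpha} = \{(\alpha)f\} \cup \set{\beta \in \Omega}{(\beta)g \in \Sigma_{\alpha} \text{ and } \beta < \alpha}$ (since $(\alpha)f \in (\Sigma_\alpha)g^{-1}$ automatically), your $\Theta_\beta$ is the paper's $\Delta_\beta$, and the five verifications proceed identically, with regularity used in the same place. Two small remarks: regularity is not actually needed for $V \in \B$ (that step is only a union of two sets of size $<|\Omega|$), only for $W \in \B$; and your proposal addresses only part (7), taking part (1) as given and leaving parts (2)--(6) unproven, so it is not a complete proof of the proposition as stated.
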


\begin{proof}
(1) Let $n \in \N$, and let $\prod_{\alpha \in \Omega} \Sigma_{i, \alpha} \in \B$ for $i \in \{0, \dots, n\}$. Then $$\bigcap_{i=0}^n \bigg(\prod_{\alpha \in \Omega} \Sigma_{i, \alpha}\bigg) = \prod_{\alpha \in \Omega} \bigg(\bigcap_{i=0}^n \Sigma_{i, \alpha}\bigg).$$ Now $$\bigg|\bigg\{\alpha \in \Omega : \beta \notin \bigcap_{i=0}^n\Sigma_{i, \alpha}\bigg\}\bigg| \leq \sum_{i=0}^n |\set{\alpha \in \Omega}{\beta \notin \Sigma_{i, \alpha}}| < \sum_{i=0}^n |\Omega| = |\Omega|$$ for any $\beta \in \Omega$. Hence $\bigcap_{i=0}^n (\prod_{\alpha \in \Omega} \Sigma_{i, \alpha}) \in \B$, showing that $\B$ is closed under finite intersections. Since the elements of $\B$ cover $\transO$ (e.g., because $\transO \in \B$), it follows that $\B$ is a base for $\T = \langle \B \rangle$.

(2) Let $\set{X_{\alpha}}{\alpha \in \Omega} \subseteq \T$ be a collection of nonempty open subsets of $\transO$. We shall construct sets $\Sigma_{\alpha} \subseteq \Omega$ such that $\prod_{\alpha \in \Omega}\Sigma_{\alpha} \in \B$ and $X_{\beta} \not\subseteq \prod_{\alpha \in \Omega}\Sigma_{\alpha}$ for all $\beta \in \Omega$, which implies that $\set{X_{\alpha}}{\alpha \in \Omega}$ cannot be a base for $\T$, and hence that any base for $\T$ must have strictly more than $|\Omega|$ elements.

Without loss of generality, we may assume that $\Omega$ is a cardinal. Since, by (1), $\B$ is a base for $\T$, for each $\beta \in \Omega$ we can find a nonempty $\prod_{\alpha \in \Omega}\Sigma_{\alpha, \beta} \in \B$ such that $\prod_{\alpha \in \Omega}\Sigma_{\alpha, \beta} \subseteq X_{\beta}$. Next we shall construct recursively a function $f : \Omega \to \Omega$. Assuming that $(\alpha)f$ is defined for all $\alpha < \beta \in \Omega$, let $(\beta)f$ be such that $(\beta)f \neq (\alpha)f$ for all $\alpha < \beta$, and such that $\beta \in \Sigma_{(\beta)f, \beta}$. Such $(\beta)f \in \Omega$ exists, since $$|\set{(\alpha)f}{\alpha < \beta} \cup \set{\alpha \in \Omega}{\beta \notin \Sigma_{\alpha, \beta}}| < \Omega.$$ Now for each $\alpha \in \Omega$ let $$\Sigma_{\alpha} = \left\{ \begin{array}{cl}
\Omega \setminus \{\beta\} & \text{if } \, \alpha = (\beta)f\\
\Omega & \text{if } \, \alpha \notin (\Omega)f
\end{array}\right..$$
This is well-defined, since $f$ is injective, and clearly $\prod_{\alpha \in \Omega}\Sigma_{\alpha} \in \B$. Moreover, $\prod_{\alpha \in \Omega}\Sigma_{\alpha, \beta} \not\subseteq \prod_{\alpha \in \Omega}\Sigma_{\alpha}$ for all $\beta \in \Omega$, since $\beta \in \Sigma_{(\beta)f, \beta} \setminus \Sigma_{(\beta)f}$. Hence $X_{\beta} \not\subseteq \prod_{\alpha \in \Omega}\Sigma_{\alpha}$ for all $\beta \in \Omega$, as desired.

(3) Let $\sigma=(\sigma_0, \sigma_1, \dots, \sigma_n)$ and $\tau=(\tau_0, \tau_1, \dots, \tau_n)$ be two finite sequences of elements of $\Omega$ of the same length, and let
$$\Sigma_{\alpha} = \left\{ \begin{array}{cl}
\{\tau_i\} & \text{if } \, \alpha = \sigma_i \text{ for some } i \in \{0, \dots, n\} \\
\Omega & \text{if } \, \alpha \in \Omega \setminus \{\sigma_0, \dots, \sigma_n\}
\end{array}\right..$$
Then $[\sigma : \tau] = \prod_{\alpha \in \Omega} \Sigma_{\alpha} \in \T$, showing that $\T$ contains the pointwise topology, and is hence Hausdorff. Since the sets of the form $[\sigma : \tau]$ constitute a base for the pointwise topology of cardinality $|\Omega|$, we conclude from (2) that $\T$ strictly contains the pointwise topology.

(4) If $\prod_{\alpha \in \Omega} \Sigma_{\alpha} \in \B$, then for any $\beta, \gamma \in \Omega$ there must be infinitely many $\alpha \in \Omega$ such that $\beta, \gamma \in \Sigma_{\alpha}$. Thus each nonempty element of $\B$ is (uncountably) infinite, and hence by (1), $\T$ has a base consisting of infinite sets. In particular, no point can be isolated in $\T$.

(5) Fix $\alpha \in \Omega$. Given $\Gamma \subseteq \Omega$, where $0 < |\Gamma| < |\Omega|$, let $X_{\Gamma} \subseteq \transO$ consist of all $f \in \transO$ such $(\Omega \setminus \Gamma)f = \alpha$. Then letting $X$ be the union of the $X_{\Gamma}$, we see that $X$ is dense in $\T$. Moreover $$|X| = \sum_{\Gamma} |X_{\Gamma}| = \sum_{\Gamma} |\Omega^{\Gamma}| = \sum_{\kappa < |\Omega|} |\Omega|^{\kappa}.$$ In the case where $\Omega$ is countable, $|X| = \aleph_0$, which implies that $\T$ is separable.

(6) Let $\prod_{\alpha \in \Omega}\Sigma_{\alpha} \in \B$ be nonempty, and suppose that $\prod_{\alpha \in \Omega}\Sigma_{\alpha} \subseteq X$ for some compact $X \subseteq \transO$. Then, by the definition of $\B$, we can find an infinite $\Gamma \subseteq \Omega$ and distinct $\gamma_{\beta} \in \Omega$, such that $\gamma_{\beta} \in \Sigma_{\beta}$ and $|\Sigma_{\beta}| \geq 2$ for all $\beta \in \Gamma$. For each $\beta \in \Gamma$ let $$\Xi_{\alpha, \beta}
= \left\{ \begin{array}{cl}
\{\gamma_{\beta}\} & \text{if } \, \alpha = \beta \\
\Omega & \text{if } \, \alpha \neq \beta
\end{array}\right.,$$ and let $$\Delta_{\beta}
= \left\{ \begin{array}{cl}
\Omega \setminus \{\gamma_{\beta}\} & \text{if } \, \beta \in \Gamma \\
\Omega & \text{if } \, \beta \notin \Gamma
\end{array}\right..$$
Also let $$Y = \bigg\{\prod_{\alpha \in \Omega} \Delta_{\alpha}\bigg\} \cup \bigg\{\prod_{\alpha \in \Omega} \Xi_{\alpha, \beta} : \beta \in \Gamma\bigg\}.$$ Then $\prod_{\alpha \in \Omega} \Xi_{\alpha, \beta} \in \B$ for each $\beta \in \Gamma$, and $\prod_{\alpha \in \Omega} \Delta_{\alpha} \in \B$. Moreover, $Y$ is an open cover of $\transO$, and hence also of $X$, since for each $f \in \transO \setminus \prod_{\alpha \in \Omega} \Delta_{\alpha}$ there is some $\beta \in \Gamma$ such that $(\beta)f = \gamma_{\beta}$, and hence $f \in \prod_{\alpha \in \Omega} \Xi_{\alpha, \beta}$. However, it is easy to see that $\prod_{\alpha \in \Omega}\Sigma_{\alpha}$ is not contained in the union of any finite collection of elements of $Y$, and hence neither is $X$, contradicting $X$ being compact. Therefore $\prod_{\alpha \in \Omega}\Sigma_{\alpha}$ is not contained in any compact subset of $\transO$. 

(7) Without loss of generality we may assume that $\Omega$ is a (regular) cardinal, and so in particular, it is well-ordered. Let $\prod_{\alpha \in \Omega}\Sigma_{\alpha} \in \B$ and  $f,g \in \transO$ be such that $fg \in \prod_{\alpha \in \Omega}\Sigma_{\alpha}$. We shall construct $\prod_{\alpha \in \Omega}\Gamma_{\alpha}, \prod_{\alpha \in \Omega}\Delta_{\alpha} \in \B$ such that $f \in \prod_{\alpha \in \Omega}\Gamma_{\alpha}$, $g \in \prod_{\alpha \in \Omega}\Delta_{\alpha}$, and 
\begin{equation*}
\tag{$\dagger$} \bigg(\prod_{\alpha \in \Omega}\Gamma_{\alpha}\bigg)\bigg(\prod_{\alpha \in \Omega}\Delta_{\alpha}\bigg) \subseteq \prod_{\alpha \in \Omega}\Sigma_{\alpha}.
\end{equation*}
The existence of such sets implies that the multiplication map on $\transO$ is continuous with respect to $\T$, and hence that $\transO$ is a topological semigroup.

For each $\alpha \in \Omega$ let $$\Gamma_{\alpha} = \{(\alpha)f\} \cup \set{\beta \in \Omega}{(\beta)g \in \Sigma_{\alpha} \text{ and } \beta < \alpha}$$ and $$\Delta_{\alpha} = \bigcap_{\set{\beta \in \Omega}{\alpha \in \Gamma_{\beta}}} \Sigma_{\beta}.$$ Then clearly $f \in \prod_{\alpha \in \Omega}\Gamma_{\alpha}$. If $\beta \notin \Gamma_{\alpha}$ for some $\alpha, \beta \in \Omega$, then either $\alpha \leq \beta$ or $(\beta)g \notin \Sigma_{\alpha}$. Since $$|\set{\alpha \in \Omega}{\alpha \leq \beta} \cup \set{\alpha \in \Omega}{(\beta)g \notin \Sigma_{\alpha}}| <\Omega$$ for all $\beta \in \Omega$, we see that $|\set{\alpha \in \Omega}{\beta \notin \Gamma_{\alpha}}| <\Omega$ for all $\beta \in \Omega$, and hence $\prod_{\alpha \in \Omega}\Gamma_{\alpha} \in \B$. 

To show that $g \in \prod_{\alpha \in \Omega}\Delta_{\alpha}$, let $\alpha \in \Omega$, and let $\beta \in \Omega$ be such that $\alpha \in \Gamma_{\beta}$. Then either $\alpha = (\beta)f$, or $(\alpha)g \in \Sigma_{\beta}$. In both cases, $(\alpha)g \in \Sigma_{\beta}$, and hence $(\alpha)g \in \Delta_{\alpha}$, as desired. 

To prove that $\prod_{\alpha \in \Omega}\Delta_{\alpha} \in \B$ we note that for any $\beta \in \Omega$, $$|\set{\alpha \in \Omega}{\beta \notin \Delta_{\alpha}}| = |\set{\alpha \in \Omega}{\exists \gamma \in \Omega \ (\alpha \in \Gamma_{\gamma} \land \beta \notin \Sigma_{\gamma})}| = \Big|\bigcup_{\set{\gamma \in \Omega}{\beta \notin \Sigma_{\gamma}}} \Gamma_{\gamma}\Big| < \Omega,$$ since $|\set{\gamma \in \Omega}{\beta \notin \Sigma_{\gamma}}| < \Omega$,  $|\Gamma_{\gamma}| < \Omega$ for each $\gamma \in \Omega$, and $\Omega$ is regular.

Finally, let $\gamma \in \Omega$, $f' \in \prod_{\alpha \in \Omega}\Gamma_{\alpha}$, and $g' \in \prod_{\alpha \in \Omega}\Delta_{\alpha}$. Then $(\gamma)f' \in \Gamma_{\gamma}$, and so $$(\gamma)f'g' \in \Delta_{(\gamma)f'} = \bigcap_{\set{\beta \in \Omega}{(\gamma)f' \in \Gamma_{\beta}}} \Sigma_{\beta} \subseteq \Sigma_{\gamma}.$$ Hence $f'g' \in \prod_{\alpha \in \Omega}\Sigma_{\alpha}$, which shows that $(\dagger)$ holds.
\end{proof}

The next result suggests that making sets of the form $\prod_{\alpha \in \Omega} \Sigma_{\alpha}$ open, other than those in the above construction, tends to result in semigroup topologies on $\transO$ that are not perfect. More specifically, only non-perfect semigroup topologies on $\transO$ have open sets $\prod_{\alpha \in \Omega} \Sigma_{\alpha}$ not of the above form that contain constant functions. In particular, no perfect semigroup topology on $\transO$ can have an open set of the form $\prod_{\alpha \in \Omega} \Sigma$, where $\Sigma$ is a proper subset of $\Omega$.

\begin{prop} \label{iso-prop}
Let $\, \Omega$ be an infinite set, and suppose that $\, \transO$ is a semitopological semigroup with respect to some topology. 
\begin{enumerate}
\item[$(1)$] Suppose that there exist $\beta \in \Omega$, a constant function $f \in \transO$, and an open neighbourhood $U$ of $f$, such that $U \subseteq \prod_{\alpha \in \Omega} \Sigma_{\alpha}$ and $\, |\set{\alpha \in \Omega}{\beta \notin \Sigma_{\alpha}}| = |\Omega|$. Then $f$ is isolated.

\item[$(2)$] Let $\, \set{\Sigma_{\alpha}}{\alpha \in \Omega}$ be a collection of disjoint nonempty subsets of $\, \Omega$, and suppose that $U \subseteq \prod_{\alpha \in \Omega} \Sigma_{\alpha}$ is open. Then every element of $U$ is isolated.
\end{enumerate}
\end{prop}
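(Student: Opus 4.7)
The plan is to prove both parts by exhibiting, for the candidate isolated point, a finite intersection of preimages of $U$ under composition-maps $\lt_s$ and $\rt_t$ that reduces to a singleton; such preimages are open by semitopologicality, so this directly exhibits the singleton as open.

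For part~(1), let $\gamma$ be the value of the constant function $f$. Since $f \in U \subseteq \prod_{\alpha} \Sigma_{\alpha}$, we have $\gamma \in \Sigma_{\alpha}$ for every $\alpha$, forcing $\gamma \neq \beta$ (using that $\Lambda := \set{\alpha \in \Omega}{\beta \notin \Sigma_{\alpha}}$ is nonempty). I would then define $t \in \transO$ to ``detect'' $\gamma$ by setting $(\gamma)t = \gamma$ and $(x)t = \beta$ for $x \neq \gamma$, and choose $s \in \transO$ with $s|_{\Lambda}$ surjecting onto $\Omega$, which is possible because $|\Lambda| = |\Omega|$. Since $f$ is constant, $sft = ft = f \in U$, so $f \in (U)(\lt_s \circ \rt_t)^{-1}$. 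Conversely, if $g$ lies in this preimage, then $(((\alpha)s)g)t \in \Sigma_{\alpha}$ for every $\alpha$; when $\alpha \in \Lambda$ this rules out the value $\beta$ and therefore forces $((\alpha)s)g = \gamma$. As $\alpha$ varies over $\Lambda$, the point $(\alpha)s$ sweeps out all of $\Omega$, so $g$ is everywhere equal to $\gamma$, i.e.\ $g = f$. Hence $(U)(\lt_s \circ \rt_t)^{-1} = \{f\}$.

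For part~(2), let $g \in U$. Disjointness of the $\Sigma_{\alpha}$ together with $(\alpha)g \in \Sigma_{\alpha}$ makes $g$ injective. I would pick distinct indices $\beta_1, \beta_2 \in \Omega$ and witnesses $y_i \in \Sigma_{\beta_i}$, and define $t_i \in \transO$ to fix $(\Omega)g$ pointwise and send every $x \in \Omega \setminus (\Omega)g$ to $y_i$. Then $gt_i = g$, so $g$ lies in the open set $V := (U)\rt_{t_1}^{-1} \cap (U)\rt_{t_2}^{-1}$. Conversely, for any $g' \in V$ and $\alpha \in \Omega$, setting $z = (\alpha)g'$, the conditions $(z)t_i \in \Sigma_{\alpha}$ for $i = 1, 2$ split into two cases. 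If $z \in (\Omega)g$, then $(z)t_i = z \in \Sigma_{\alpha}$; writing $z = (\alpha^*)g \in \Sigma_{\alpha^*}$ for the unique $\alpha^*$, disjointness forces $\alpha = \alpha^*$ and $(\alpha)g' = (\alpha)g$. If $z \notin (\Omega)g$, then $y_1, y_2 \in \Sigma_{\alpha}$ simultaneously, contradicting $\beta_1 \neq \beta_2$ and the disjointness of the $\Sigma_{\beta_i}$. Hence $g' = g$, so $V = \{g\}$.

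The main technical point in both parts is choosing the auxiliary maps so that the resulting preimage contains the target point yet excludes every other element, while remaining open by continuity. In part~(1) this hinges on the surjectivity of $s|_{\Lambda}$ onto $\Omega$, which propagates the single pointwise constraint $((\alpha)s)g = \gamma$ to every coordinate of $g$ and is precisely what the cardinality assumption $|\Lambda| = |\Omega|$ is used for. In part~(2) the crux is using two distinct witnesses $y_1 \in \Sigma_{\beta_1}$ and $y_2 \in \Sigma_{\beta_2}$ to manufacture a disjointness contradiction whenever $z$ lies outside $(\Omega)g$; in the degenerate subcase $(\Omega)g = \Omega$ this off-image case is vacuous, and the first case of the dichotomy already shows $U = \{g\}$.
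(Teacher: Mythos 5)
Your proof is correct in both parts. For part (1) your argument is essentially the paper's: the paper uses the same map $t$ (called $g$ there, sending everything except the constant value $\gamma$ to $\beta$) and then, writing $\Gamma = \Omega\setminus\Lambda$, transports the resulting constraint from $\Omega\setminus\Gamma$ onto $\Gamma$ by a second map $h$ with $(\Omega\setminus\Gamma)h=\Gamma$, so the singleton arises as an intersection of two open sets; you instead fold both steps into the single preimage $\set{y}{syt\in U}$ by choosing $s$ so that $s|_{\Lambda}$ already surjects onto $\Omega$, which is a mild and clean repackaging of the same idea. For part (2) your route is genuinely different: the paper constructs a disjoint family $\Xi_{\alpha}$ with $\Sigma_{\alpha}\cap\Xi_{\alpha}=\{(\alpha)f\}$, $|\Xi_{\alpha}|\geq|\Sigma_{\alpha}|$ and $\bigcup_{\alpha}\Xi_{\alpha}=\Omega$, then takes a single right translation $g$ with $(\Xi_{\alpha})g=\Sigma_{\alpha}$ and shows $U\cap(U)\rt_g^{-1}=\{f\}$; you instead exploit the injectivity of the target function (forced by disjointness of the $\Sigma_{\alpha}$) and use two right translations $t_1,t_2$ that fix the image pointwise and collapse its complement into two different blocks $\Sigma_{\beta_1},\Sigma_{\beta_2}$, deriving a disjointness contradiction whenever a coordinate of a competing function leaves the image. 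Your version avoids the somewhat fiddly construction of the sets $\Xi_{\alpha}$ and is shorter; the paper's version has the minor advantage of not needing the case analysis on whether a value lies in the image (and of producing the singleton as $U$ intersected with one preimage rather than two). Both are complete; the degenerate subcase $(\Omega)g=\Omega$ you flag is indeed handled by your first case alone.
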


\begin{proof}
(1) Write $(\Omega)f = \gamma$, set $\Gamma = \set{\alpha \in \Omega}{\beta \in \Sigma_{\alpha}}$, and let $g \in \transO$ be defined by $$(\alpha)g 
= \left\{ \begin{array}{cl}
\beta & \text{if } \, \alpha \in \Omega\setminus \{\gamma\}\\
\gamma & \text{if } \, \alpha = \gamma
\end{array}\right..$$ Then $$(U)\rt_g^{-1} \subseteq \bigg(\prod_{\alpha \in \Omega} \Sigma_{\alpha}\bigg)\rt_g^{-1} \subseteq \set{y \in \transO}{(\alpha)y = \gamma \text{ for all } \alpha \in \Omega \setminus \Gamma},$$ and $f \in (U)\rt_g^{-1}$. If $\Gamma = \emptyset$, then this implies that $(U)\rt_g^{-1} = \{f\}$, and this set is open, since $U$ is. Hence we may assume that $\Gamma \neq \emptyset$. Since $|\Omega \setminus \Gamma| = |\Omega|$, there exists an $h \in \transO$ such that $(\Omega \setminus \Gamma)h = \Gamma$. Then $$(U)\rt_g^{-1} \circ \lt_h^{-1} \subseteq \bigg(\prod_{\alpha \in \Omega} \Sigma_{\alpha}\bigg)\rt_g^{-1} \circ \lt_h^{-1} \subseteq \set{y \in \transO}{(\alpha)y = \gamma \text{ for all } \alpha \in \Gamma},$$ and $f \in (U)\rt_g^{-1} \circ \lt_h^{-1}$. Since $U$ is open, it follows that $(U)\rt_g^{-1} \cap ((U)\rt_g^{-1} \circ \lt_h^{-1}) = \{f\}$ is open.

(2) We may assume that $U\neq \emptyset$, since otherwise there is nothing to prove. Let $f \in U$ be arbitrary. For each $\alpha \in \Omega$ we wish to construct $\Xi_{\alpha} \subseteq \Omega$, such that the following properties are satisfied:
\begin{enumerate}
\item[$($a$)$] $|\Sigma_{\alpha}|\leq |\Xi_{\alpha}|$ for all $\alpha \in \Omega$, 
\item[$($b$)$] $\Sigma_{\alpha} \cap \Xi_{\alpha} = \{(\alpha)f\}$ for all $\alpha \in \Omega$, 
\item[$($c$)$] $\Xi_{\alpha} \cap \Xi_{\beta} = \emptyset$ for all distinct $\alpha, \beta \in \Omega$,
\item[$($d$)$] $\bigcup_{\alpha \in \Omega} \Xi_{\alpha} = \Omega$.
\end{enumerate}
First, partition $\Omega$ as $\Omega = \bigcup_{\alpha \in \Omega} \Lambda_{\alpha}$, where $|\Lambda_{\alpha}| = |\Omega|$ for each $\alpha \in \Omega$. Then $|(\bigcup_{\beta \in \Lambda_{\alpha}} \Sigma_{\beta})\setminus \Sigma_{\alpha}| = |\Omega|$ for each $\alpha \in \Omega$, since the $\Sigma_{\alpha}$ are disjoint and $|\Lambda_{\alpha}| = |\Omega|$. Hence for each $\alpha \in \Omega$ we can construct $\Xi_{\alpha}$ that satisfies (a) and (b) by choosing $|\Sigma_{\alpha}|$ elements from $(\bigcup_{\beta \in \Lambda_{\alpha}} \Sigma_{\beta})\setminus \Sigma_{\alpha}$, along with $(\alpha)f$. Since the $\Lambda_{\alpha}$ are disjoint, the sets so constructed also satisfy (c). Now add any remaining elements of $\Omega$ to the $\Xi_{\alpha}$ in any way that preserves $\Sigma_{\alpha} \cap \Xi_{\alpha} = \{(\alpha)f\}$ for all $\alpha \in \Omega$. The resulting sets $\Xi_{\alpha}$ will then satisfy (a), (b), (c), and (d).

Since the $\Xi_{\alpha}$ satisfy (a), (b), and (c), we can find a $g\in \transO$ such that $(\Xi_{\alpha})g = \Sigma_{\alpha}$ and $(\alpha)fg = (\alpha)f$ for all $\alpha \in \Omega$. If $h\in \transO$ is such that $hg \in U$, then $(\alpha)hg \in \Sigma_{\alpha}$, and hence $(\alpha)h\in (\Sigma_{\alpha})g^ {-1} = \Xi_{\alpha}$, for all $\alpha \in \Omega$ (since the $\Sigma_{\alpha}$ are disjoint and $\bigcup_{\alpha \in \Omega} \Xi_{\alpha} = \Omega$).  Thus $(U)\rt_g^{-1} \subseteq \prod_{\alpha \in \Omega}\Xi_{\alpha}$, and therefore $U \cap (U)\rt_g^{-1} = \{f\}$, in light of condition (b). Since $U$ is open, it follows that $f$ is isolated.
\end{proof}

\section{Compactness} \label{compact-section}

Our next goal is to describe the compact sets in an arbitrary $T_1$ semigroup topology on $\transO$. We begin with a characterisation of the compact sets in the pointwise topology on $\transO$. If $\Omega$ is countable, in which case the pointwise topology on $\transO$ is Polish, and hence metrisable, this characterisation can be obtained from the standard fact that a subset of a metric space is compact if and only if it is complete and totally bounded. Proving the fact in question for arbitrary $\Omega$ is also easy, but we provide the details for the convenience of the reader.

Given a topological space $X$, a subset $U$ of $X$ is \emph{nowhere dense} if $X \setminus \overline{U}$ is dense in $X$, where $\overline{U}$ denotes the closure of $U$.

\begin{lem}\label{compact-lem}
Let $\, \Omega$ be a set, let $X \subseteq \transO$, and let $\, \T$ denote the pointwise topology on $\, \transO$. Then the following hold.
\begin{enumerate}
\item[$(1)$] $X$ is compact in $\, \T$ if and only if $X$ is closed in $\, \T$ and $\, |(\alpha)X| < \aleph_0$ for all $\alpha \in \Omega$.
\item[$(2)$] If $\, \Omega$ is infinite and $X$ is compact in $\, \T$, then $X$ is nowhere dense.
\end{enumerate}
\end{lem}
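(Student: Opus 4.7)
The plan for part (1) is to exploit the identification of $\transO$ with $\prod_{\alpha\in\Omega}\Omega$ where each factor carries the discrete topology, so that $\T$ is precisely the product topology. For the forward implication, if $X$ is compact then $X$ is closed because $\T$ is Hausdorff; moreover, for each $\alpha \in \Omega$ the projection $\pi_\alpha : \transO \to \Omega$ defined by $(f)\pi_\alpha = (\alpha)f$ is continuous, so $(\alpha)X = (X)\pi_\alpha$ is a compact subset of the discrete space $\Omega$ and therefore finite. For the reverse implication, assuming $X$ is closed with each $(\alpha)X$ finite, I would set $Y = \prod_{\alpha \in \Omega} (\alpha)X$, note that $X \subseteq Y$, observe that $Y$ is a product of finite (hence compact) discrete spaces and therefore compact by Tychonoff's theorem, and finally note that since the product topology on $Y$ agrees with the subspace topology inherited from $\transO$, $X$ is a closed subset of the compact space $Y$ and thus compact.

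For part (2), I would invoke part (1) to observe that $X$ is closed, hence $\overline{X} = X$, and so it suffices to show that every nonempty basic open set $[\sigma:\tau]$ meets $\transO \setminus X$. Given such a nonempty set with $\sigma = (\sigma_0,\dots,\sigma_n)$ and $\tau = (\tau_0,\dots,\tau_n)$, I would use infiniteness of $\Omega$ to choose $\alpha \in \Omega \setminus \{\sigma_0,\dots,\sigma_n\}$, and then use finiteness of $(\alpha)X$ from part (1) to choose $\beta \in \Omega \setminus (\alpha)X$. Defining $f \in \transO$ by $(\sigma_i)f = \tau_i$ for each $i$, $(\alpha)f = \beta$, and extending arbitrarily, we get $f \in [\sigma:\tau]$ with $(\alpha)f \notin (\alpha)X$, so $f \notin X$. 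Hence the complement of $\overline{X}$ meets every basic open set and is dense, as required.

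I do not anticipate a serious obstacle in either part. The reverse direction of (1) rests on Tychonoff's theorem, which is simply cited; the only minor subtlety is confirming that the subspace topology on $Y \subseteq \transO$ matches its intrinsic product topology, and this is immediate from the definition of the product topology. Part (2) is then a routine density argument that hinges only on having infinitely many unused coordinates to perturb, which is exactly what the infiniteness of $\Omega$ provides.
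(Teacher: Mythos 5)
Your proposal is correct and follows essentially the same route as the paper: compactness gives closedness via Hausdorffness and finiteness of each $(\alpha)X$ (the paper extracts a finite subcover of $\{[(\alpha):(\beta)]\cap X\}_{\beta}$ where you use continuity of the projection, which is the same argument in different clothing), the converse embeds $X$ as a closed subset of the Tychonoff-compact product $\prod_{\alpha}(\alpha)X$, and part (2) perturbs an unused coordinate out of the finite set $(\alpha)X$. No gaps.
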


\begin{proof}
(1) We may assume that $\Omega$ is nonempty, since otherwise every subset of $\transO$ is both compact and closed in $\T$.

Suppose that $X$ is compact. It is a standard fact that in a Hausdorff space every compact subset is closed (see, e.g.,~\cite[Theorem 26.3]{Munkres}), and hence $X$ is closed in $\T$.   Now, let $\alpha \in \Omega$. Then $$X = \bigcup_{\beta \in \Omega} ([(\alpha) : (\beta)] \cap X).$$ Since $X$ compact in $\T$, $$X = \bigcup_{i=0}^n ([(\alpha) : (\beta_i)] \cap X)$$ for some $\beta_0, \dots, \beta_n \in \Omega$ and $n \in \N$. Hence $(\alpha)X \subseteq \{\beta_0, \dots, \beta_n\}$, giving $|(\alpha)X| < \aleph_0$.

Conversely, suppose that $X$ is closed in $\T$ and $|(\alpha)X| < \aleph_0$ for all $\alpha \in \Omega$. For each $\alpha \in \Omega$ let $\Sigma_{\alpha} = (\alpha)X$, and let $Y = \prod_{\alpha \in \Omega} \Sigma_{\alpha} \subseteq \transO$ consist of all functions $f \in \transO$ such that $(\alpha)f \in \Sigma_{\alpha}$. Then $X \subseteq Y$.  Since every closed subset of a compact set is compact (see, e.g.,~\cite[Theorem 26.2]{Munkres}), it suffices to show that $Y$ is compact. Now the subspace topology on $Y$ induced by $\T$ is precisely the product topology on $\prod_{\alpha \in \Omega} \Sigma_{\alpha}$ resulting from endowing each $\Sigma_{\alpha}$ with the discrete topology. Since each $\Sigma_{\alpha}$ is finite, and hence compact, $Y$ is a product of compact sets. Therefore $Y$ compact, by Tychonoff's theorem (see, e.g.,~\cite[Theorem 37.3]{Munkres}), as desired.

(2) If $X$ is compact, then by (1), for each $\alpha \in \Omega$ the set $(\alpha)X = \Sigma_{\alpha}$ is finite. As before we have $X \subseteq Y = \prod_{\alpha \in \Omega} \Sigma_{\alpha}$. Now let $\sigma$ and $\tau$ be any two sequences of elements of $\Omega$ of the same finite length. Since $\Omega$ is infinite, we can find some $\alpha \in \Omega \setminus \sigma$, and some $f \in [\sigma : \tau]$ such that $(\alpha)f \notin \Sigma_{\alpha}$. Then $f \in \transO\setminus Y \subseteq \transO\setminus X$. Since sets of the form $[\sigma : \tau]$ constitute a base for $\T$, this implies that $\transO \setminus \overline{X} = \transO \setminus X$ is dense in $\transO$, and hence $X$ is nowhere dense.
\end{proof}

\begin{prop} \label{compact-prop}
Let $\, \Omega$ be a set, and suppose that $\, \transO$ is a semitopological semigroup with respect to some $T_1$ topology. If $X \subseteq \transO$ is compact, then $X$ is closed and $|(\alpha)X| < \aleph_0$ for all $\alpha \in \Omega$.
\end{prop}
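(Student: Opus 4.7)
The plan is to deduce the result from Theorem~\ref{point-finer} together with the pointwise-topology characterisation of compact sets in Lemma~\ref{compact-lem}(1). The key observation is that a $T_1$ semitopological semigroup topology on $\transO$ is automatically finer than the pointwise topology $\P$, and coarsening a topology never destroys compactness: every $\P$-open cover of $X$ is also a $\T$-open cover, so a $\T$-compact $X$ is automatically $\P$-compact. Once we have $\P$-compactness, Lemma~\ref{compact-lem}(1) gives the finiteness of the image sets $(\alpha)X$ for free.

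First I would invoke Theorem~\ref{point-finer}, applied with $S = \transO$ (which trivially contains all transformations of rank at most $2$): the $T_1$ assumption on $\T$ gives both that $\T \supseteq \P$ (via $(2) \Rightarrow (3)$) and that $\T$ is in fact Hausdorff (via $(2) \Rightarrow (1)$). Next, using $\T \supseteq \P$, I would note that any $\T$-compact subset $X$ is $\P$-compact, and then apply Lemma~\ref{compact-lem}(1) to conclude that $|(\alpha)X| < \aleph_0$ for every $\alpha \in \Omega$.

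For the closedness of $X$ in $\T$, I would simply use the standard fact that in any Hausdorff space every compact set is closed (cited already in the proof of Lemma~\ref{compact-lem}(1)); since $\T$ has just been shown to be Hausdorff, this applies directly to the $\T$-compact set $X$.

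There is essentially no obstacle here: all the substantive content was already done in Theorem~\ref{point-finer} and Lemma~\ref{compact-lem}. The only minor subtlety worth stating carefully is the direction of the implication ``finer topology $\Rightarrow$ fewer compact sets,'' which is the one we need, rather than the reverse. No explicit construction of covers or functions is needed; the proof is purely a combination of the earlier results.
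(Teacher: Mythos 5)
Your proposal is correct and follows essentially the same route as the paper: invoke Theorem~\ref{point-finer} to get $\T \supseteq \P$, observe that $\T$-compactness implies $\P$-compactness, and apply Lemma~\ref{compact-lem}(1). The only cosmetic difference is that you obtain closedness of $X$ from Hausdorffness of $\T$ directly, whereas the paper reads it off from Lemma~\ref{compact-lem}(1) (closed in $\P$, hence closed in the finer topology $\T$); both are valid.
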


\begin{proof}
By Theorem~\ref{point-finer}, if $\T$ is a $T_1$ topology with respect to which $\transO$ is a semitopological semigroup, then $\T$ contains the pointwise topology. Hence a set compact in such a topology $\T$ is also compact in the pointwise topology. The desired conclusion now follows from Lemma~\ref{compact-lem}(1).
\end{proof}

In general, it is not the case that all subsets of $\transO$ (for $\Omega$ infinite) that are closed and satisfy $|(\alpha)X| < \aleph_0$ for all $\alpha \in \Omega$ are compact in a $T_1$ semigroup topology. For example, consider the discrete topology, where only finite sets are compact.

Recall that a topological space is \emph{locally compact} if every element has an open neighbourhood that is contained in some compact set. The remainder of this section is devoted to showing that locally compact semigroup topologies on $\transO$ are generally not well-behaved.

\begin{cor} \label{loc-com-base}
Let $\, \Omega$ be an infinite set, and suppose that $\, \transO$ is a semitopological semigroup with respect to some locally compact $T_1$ topology $\, \T$. Then any base for $\, \T$ must have strictly more than $\, |\Omega|$ elements. 
\end{cor}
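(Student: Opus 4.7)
The plan is to argue by contradiction: assume that $\T$ admits a base $\B$ with $|\B|\leq |\Omega|$, and derive a transformation $f\in\transO$ that lies in no element of $\B$. The key inputs are Theorem~\ref{point-finer}, which forces $\T$ to refine the pointwise topology, and Proposition~\ref{compact-prop}, which says that any compact set $K\subseteq\transO$ satisfies $|(\alpha)K|<\aleph_0$ for every $\alpha\in\Omega$.

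First I would pass to the sub-family $\B'=\{B\in\B: B\subseteq K_B\text{ for some compact }K_B\subseteq\transO\}$. Local compactness of $\T$, together with the fact that $\B$ is a base, ensures that every point of $\transO$ has a neighbourhood in $\B'$, so $\B'$ is itself a cover of $\transO$, and $|\B'|\leq|\Omega|$. For each $B\in\B'$, fix a compact $K_B\supseteq B$ and set $\Sigma_{B,\alpha}=(\alpha)K_B$ for $\alpha\in\Omega$. By Proposition~\ref{compact-prop} each $\Sigma_{B,\alpha}$ is finite, and
$$B\;\subseteq\; K_B\;\subseteq\; \prod_{\alpha\in\Omega}\Sigma_{B,\alpha}.$$

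Next I would run a diagonal construction essentially identical to the one used in the proof of Proposition~\ref{open-prod-top}(2). Choose any surjection $\Omega\to\B'$, written $\alpha\mapsto B_{\alpha}$ (possible because $|\B'|\leq|\Omega|$). For each $\alpha\in\Omega$ the set $\Sigma_{B_{\alpha},\alpha}$ is finite while $\Omega$ is infinite, so we may pick $(\alpha)f\in\Omega\setminus\Sigma_{B_{\alpha},\alpha}$. Given any $B\in\B'$, the surjectivity of $\alpha\mapsto B_{\alpha}$ provides an $\alpha$ with $B_{\alpha}=B$, and then $(\alpha)f\notin\Sigma_{B,\alpha}$, so $f\notin\prod_{\beta\in\Omega}\Sigma_{B,\beta}$ and hence $f\notin B$. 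Thus $f\in\transO$ is not covered by $\B'$, contradicting the fact that $\B'$ covers $\transO$. This contradiction forces $|\B|>|\Omega|$.

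The only step that requires any real care is the reduction to the sub-family $\B'$: I need to be sure that local compactness, combined with $\B$ being a base, lets one select, for each point, a basic open neighbourhood contained in a compact set. That is immediate once one chooses a compact neighbourhood $K$ of $f$ with interior $U$, and then picks any $B\in\B$ with $f\in B\subseteq U$. After that, the diagonal argument is just a repetition of the technique already developed in Proposition~\ref{open-prod-top}(2), applied now to the finite ``slices'' $\Sigma_{B,\alpha}$ coming from Proposition~\ref{compact-prop} rather than to slices of a general element of the base $\B$ from that earlier proposition.
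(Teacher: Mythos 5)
Your proposal is correct and follows essentially the same route as the paper: both reduce to a cover of $\transO$ by at most $|\Omega|$ compact sets indexed by $\Omega$ (via local compactness and the size bound on the base), apply Proposition~\ref{compact-prop} to make each slice $(\alpha)X_\alpha$ finite, and then diagonalise to produce an uncovered $f$. The only cosmetic difference is that you index the basic sets by a surjection from $\Omega$ and diagonalise against the containing compact sets, whereas the paper indexes the compact sets directly.
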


\begin{proof}
Suppose that there is a base $\B$ for $\T$ such that $|\B| \leq |\Omega|$. Since $\T$ is locally compact, for every $f \in \transO$ there exists some $U_f \in \B$ and a compact set $X_f \subseteq \transO$ such that $f \in U_f \subseteq X_f$. Hence $\bigcup_{f \in \transO} X_f = \transO$. Since $|\B| \leq |\Omega|$, we can find some subset $\set{X_{\alpha}}{\alpha \in \Omega}$ of $\set{X_f}{f \in \transO}$, such that $\bigcup_{\alpha \in \Omega} X_{\alpha} = \transO$. In view of each $X_{\alpha}$ being compact, it follows from Proposition~\ref{compact-prop} that $(\alpha)X_{\alpha}$ is finite for each $\alpha \in \Omega$. But since $\Omega$ is infinite, we can find some $f \in \transO$ such that $(\alpha)f \notin (\alpha)X_{\alpha}$ for all $\alpha \in \Omega$, which contradicts $\bigcup_{\alpha \in \Omega} X_{\alpha} = \transO$. Thus any base for $\, \T$ must have $>|\Omega|$ elements.
\end{proof}

We observe that while the topology on $\transO$ constructed in Proposition~\ref{open-prod-top} satisfies the conclusion of the previous result, it is not locally compact, since Proposition~\ref{open-prod-top}(6) implies that no element of $\transO$ has an open neighbourhood that is contained in a compact set.

We are now ready to give a partial analogue of the result~\cite[Section 4]{Gaughan} of Gaughan that there is no nondiscrete locally compact Hausdorff group topology on $\symO$.

\begin{theorem} \label{loc-comp-perf}
Let $\, \Omega$ be an infinite set such that $\, |\Omega|$ has uncountable cofinality, and suppose that $\, \transO$ is a semitopological semigroup with respect to a locally compact topology $\, \T$. Then $\, \T$ is either not perfect or not $T_1$.
\end{theorem}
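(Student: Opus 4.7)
The plan is to argue by contradiction: assume $\T$ is simultaneously locally compact, perfect, and $T_1$, and derive a contradiction by combining Proposition~\ref{iso-prop}(1) applied at a constant function with a cardinal-arithmetic argument exploiting the uncountable cofinality of $|\Omega|$.

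First I would fix any $\gamma\in\Omega$ and let $c_\gamma\in\transO$ denote the constant function with value $\gamma$. Local compactness produces a $\T$-open neighbourhood $U$ of $c_\gamma$ contained in some compact $X\subseteq\transO$. Setting $\Sigma_\alpha:=(\alpha)X$ for each $\alpha\in\Omega$, Proposition~\ref{compact-prop} tells us that every $\Sigma_\alpha$ is finite, and of course $U\subseteq X\subseteq \prod_{\alpha\in\Omega}\Sigma_\alpha$.

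Next I use perfectness: since no point of $\transO$ is isolated, in particular $c_\gamma$ is not isolated. The contrapositive of Proposition~\ref{iso-prop}(1) then forces $|\set{\alpha\in\Omega}{\beta\notin\Sigma_\alpha}|<|\Omega|$ for every $\beta\in\Omega$. At this point I would pick any sequence $\beta_0,\beta_1,\ldots$ of pairwise distinct elements of $\Omega$ (available because $\Omega$ is infinite) and set $A_n=\set{\alpha\in\Omega}{\beta_n\notin\Sigma_\alpha}$, so that $|A_n|<|\Omega|$ for every $n$.

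Finally, the hypothesis $\cf(|\Omega|)>\aleph_0$ means that any countable family of cardinals strictly below $|\Omega|$ has supremum strictly below $|\Omega|$; hence $|\bigcup_{n\in\N}A_n|\leq\max(\aleph_0,\sup_n|A_n|)<|\Omega|$. Consequently $\Omega\setminus\bigcup_n A_n$ is nonempty, and any $\alpha$ in this complement satisfies $\beta_n\in\Sigma_\alpha$ for every $n\in\N$, making $\Sigma_\alpha$ infinite and contradicting Proposition~\ref{compact-prop}. The only nontrivial step is recognising that Proposition~\ref{iso-prop}(1), invoked at a constant function, is exactly the tool that turns the perfectness hypothesis into a combinatorial restriction on the $\Sigma_\alpha$; once that observation is in place, everything else is routine cardinal arithmetic with uncountable cofinality.
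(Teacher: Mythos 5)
Your argument is correct and follows essentially the same route as the paper's proof: a constant function, local compactness yielding $U\subseteq X$ with $\Sigma_\alpha=(\alpha)X$, the contrapositive of Proposition~\ref{iso-prop}(1) to get $|\set{\alpha\in\Omega}{\beta\notin\Sigma_\alpha}|<|\Omega|$, and the uncountable-cofinality bound on a countable union to produce an infinite $\Sigma_\alpha$ contradicting Proposition~\ref{compact-prop}. The only cosmetic difference is that you invoke Proposition~\ref{compact-prop} at the outset rather than at the end.
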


\begin{proof}
Seeking a contradiction, suppose that $\T$ is perfect and $T_1$, and let $f \in \transO$ be a constant function. Since $\T$ is locally compact, there must be a compact set $X$ and an open neighbourhood $U$ of $f$ such that $U \subseteq X$. Letting $\Sigma_{\alpha} = (\alpha)X$ for each $\alpha \in \Omega$, we have $U \subseteq \prod_{\alpha \in \Omega} \Sigma_{\alpha}$. Since $\T$ is perfect, by Proposition~\ref{iso-prop}(1), it must be the case that $|\set{\alpha \in \Omega}{\beta \notin \Sigma_{\alpha}}| < |\Omega|$ for each $\beta \in \Omega$. Then letting $\Gamma \subseteq \Omega$ be a countably infinite set, $$|\set{\alpha \in \Omega}{\Gamma \not\subseteq \Sigma_{\alpha}}| \leq \Big|\bigcup_{\beta \in \Gamma}\set{\alpha \in \Omega}{\beta \notin \Sigma_{\alpha}}\Big| < |\Omega|,$$ since $|\Omega|$ is assumed to have uncountable cofinality. Hence $\Gamma \subseteq \Sigma_{\alpha}$ for some $\alpha \in \Omega$, making $\Sigma_{\alpha}$ infinite. But since $\T$ is $T_1$, by Proposition~\ref{compact-prop}, $|\Sigma_{\alpha}| < \aleph_0$ for all $\alpha \in \Omega$, producing the desired contradiction.
\end{proof}

In light of Theorem~\ref{loc-comp-perf} we ask the following.

\begin{question}
Does there exist an infinite set $\, \Omega$ and a perfect locally compact $T_1$ (or Hausdorff) topology on $\, \transO$ with respect to which it is a topological semigroup?
\end{question}

\section*{Acknowledgement}

We are grateful to Slawomir Solecki for bringing to our attention a result of Montgomery~\cite[Theorem 2]{Montgomery}, that plays a crucial role in the proof of Theorem~\ref{polish-thrm}.

\medskip

\noindent Z.\ Mesyan, Department of Mathematics, University of Colorado, Colorado Springs, CO, 80918, USA 

\noindent \emph{Email:} \href{mailto:zmesyan@uccs.edu}{zmesyan@uccs.edu}

\medskip

\noindent J.\ D.\ Mitchell, Mathematical Institute, North Haugh, St Andrews, Fife, KY16 9SS, Scotland

\noindent \emph{Email:} \href{mailto:jdm3@st-and.ac.uk}{jdm3@st-and.ac.uk}

\medskip

\noindent Y.\ P\'eresse, University of Hertfordshire, Hatfield, Hertfordshire, AL10 9AB, UK

\noindent \emph{Email:} \href{mailto:y.peresse@herts.ac.uk}{y.peresse@herts.ac.uk}

\end{document}